\documentclass[11pt]{article}

\usepackage[a4paper,margin=1in]{geometry}
\usepackage[T1]{fontenc}
\usepackage[utf8]{inputenc}
\usepackage{lmodern}
\usepackage{microtype}
\usepackage{graphicx}
\usepackage{epstopdf}
\usepackage{amsmath,amssymb,amsfonts,amsthm,mathtools}
\usepackage{bm}
\usepackage{accents}
\usepackage{algorithm2e}
\usepackage{booktabs}
\usepackage{multirow}
\usepackage{enumitem}
\usepackage{subcaption}
\usepackage{xspace}
\usepackage{url}
\usepackage[numbers]{natbib}
\usepackage[hidelinks]{hyperref}
\usepackage{xcolor}
\newcommand{\email}[1]{\href{mailto:#1}{#1}}

\setlist[itemize]{noitemsep, topsep=2pt}
\setlist[enumerate]{noitemsep, topsep=2pt}
\DontPrintSemicolon
\RestyleAlgo{ruled}
\LinesNumbered

\newtheorem{thm}{Theorem}

\theoremstyle{definition}
\newtheorem{defn}{Definition}
\newtheorem{exa}{Example}
\newtheorem{rem}{Remark}

\newcommand{\cX}{\underline{\bf X}}

\newcommand{\oast}{\mathbin{\odot}}

\title{A New Tensor Network: Tubal Tensor Train and Its Applications}
\author{Salman Ahmadi-Asl\thanks{Lab of Machine Learning and Knowledge Representation, Innopolis University, Innopolis, Russia 
(\email{s.ahmadiasl@innopolis.ru}).}
\and Valentin Leplat\thanks{Innopolis University, Innopolis, Russia, (\email{v.leplat@innopolis.ru}).}  \and Anh-Huy Phan\thanks{Skolkovo Institute of Science and Technology, Center for Artificial Intelligence Technology, Moscow, Russia, (\email{a.phan@skoltech.ru}).} \and Andrzej Cichocki\thanks{Systems Research Institute of Polish Academy of Science, Warsaw, Poland (\email{cichockiand@gmail.com}).\\
The first two authors contributed equally to this work.} 
}
\date{}
\begin{document}
\maketitle

\begin{abstract}
We introduce the tubal tensor train (TTT) decomposition, a tensor-network model that combines the t-product algebra of the tensor singular value decomposition (T-SVD) with the low-order core structure of the tensor train (TT) format. For an order-$(N+1)$ tensor with a distinguished tube mode, the proposed representation consists of two third-order boundary cores and $N-2$ fourth-order interior cores linked through the t-product. As a result, for bounded tubal ranks, the storage scales linearly with the number of modes, in contrast to direct high-order extensions of T-SVD. We present two computational strategies: a sequential fixed-rank construction, called TTT-SVD, and a Fourier-slice alternating scheme based on the alternating two-cores update (ATCU). We also state a TT-SVD-type error bound for TTT-SVD and illustrate the practical performance of the proposed model on image compression, video compression, tensor completion, and hyperspectral imaging.
\end{abstract}
\medskip

\noindent\textbf{Keywords:}
tubal tensor train; tensor train; t-product; tensor singular value decomposition; low-rank tensor approximation; tensor compression; tensor completion; hyperspectral imaging

\section{Introduction}

Tensor decompositions are now standard tools for representing multidimensional data in a compact and structured form. They play an important role in machine learning, computer vision, signal processing, and scientific computing, where the data naturally appear as images, videos, hyperspectral cubes, or higher-order feature arrays \cite{sidiropoulos2017tensor,kolda2009tensor,papalexakis2016tensors}. Classical models such as the canonical polyadic decomposition (CPD) \cite{hitchcock1927expression,hitchcock1928multiple}, Tucker decomposition \cite{tucker1963implications,de2000multilinear}, block-term decomposition \cite{de2008decompositionsIII}, tensor train (TT) decomposition \cite{oseledets2011tensor}, and tensor chain/ring decompositions \cite{espig2012note,zhao2016tensor} each offer a different trade-off between compression, flexibility, and numerical tractability.

Among these models, the tensor singular value decomposition (T-SVD) has become especially attractive for third-order tensors because it relies on the t-product and inherits many useful features of the matrix SVD \cite{kilmer2011factorization,kilmer2013third}. This makes T-SVD effective for compression, denoising, and completion. A key limitation, however, appears when one moves to higher-order tensors: direct extensions of T-SVD retain cores whose order grows with the order of the data tensor, which leads to a curse-of-dimensionality bottleneck \cite{martin2013order,wang2022hot}. In short, the algebra remains elegant, but the representation becomes progressively less practical.

The objective of this paper is to bridge the gap between the favorable t-product algebra of T-SVD and the scalable low-order-core structure of TT. To this end, we introduce the \emph{tubal tensor train} (TTT) decomposition. The idea is to preserve the distinguished tube mode and its associated convolutional structure, while organizing the remaining modes in a train topology. The resulting model replaces a single high-order t-product factorization by a sequence of third- and fourth-order tubal cores linked by the t-product. For moderate tubal ranks, the storage then scales linearly with the number of modes.

Our work is related to several recent attempts to generalize t-product-based models. The convolutional tensor-train model of \citet{su2020convolutional} replaces the standard TT contractions by convolution-type operations for spatio-temporal learning; our construction is different in that it is explicitly built on the t-product and preserves the distinguished tube mode throughout the network. Higher-order variants of T-SVD were proposed in \citet{wang2022hot}, where a tensor--tensor-product analogue of HOSVD is developed together with truncated and sequentially truncated variants. The relationship between T-SVD and HOSVD was further studied in \citet{zeng2020decompositions}, which also motivated the orientation SVD model and its randomized extension \cite{ding2023randomized}. The present paper follows a different route: instead of redefining a single global high-order t-product factorization, we combine the t-product with a train network in order to retain low-order cores while preserving the tubal algebra.

\paragraph{Contributions.}
The main contributions are:
\begin{itemize}[leftmargin=*]
    \item We introduce the tubal tensor train (TTT) decomposition, a new tensor-network model that combines the t-product structure of T-SVD with the train topology of TT.
    \item We show how TTT avoids the high-order-core bottleneck of direct T-SVD extensions by using only third-order and fourth-order tubal cores.
    \item We present two practical algorithms: a fixed-rank sequential construction (TTT-SVD) and a Fourier-domain alternating scheme based on ATCU (TATCU).
    \item We report numerical experiments on RGB images, videos, tensor completion, and hyperspectral images.
\end{itemize}

The paper is organized as follows. Section~\ref{Sec:prelim} introduces the notation and recalls the t-product framework. Section~\ref{Sec:tSVD} summarizes the TT and T-SVD models that motivate our construction. Section~\ref{Sec_TTT} defines the TTT format and presents the proposed algorithms together with an error bound for TTT-SVD. Numerical experiments are reported in Section~\ref{Sec:Sim}. Extended hyperspectral benchmarks and supplementary visualizations, including additional graphical explanations of TT, T-SVD, and the TTT-SVD procedure, are collected in the appendices; see in particular Appendix~\ref{app:more_tt_ttt_algo}.

\section{Preliminaries on the t-product algebra}\label{Sec:prelim}

We work with real-valued tensors throughout the paper; the complex-valued case follows analogously. Since the proposed model is built on the t-product framework, we begin with the third-order tensor setting where the t-product and T-SVD are naturally defined. We then introduce the hyper-vector and hyper-matrix viewpoint that allows these ideas to be used inside a train network for higher-order tensors.

\subsection{Third-order tensors, slices, fibers, tubes, and the t-product}

Let
\[
\underline{\mathbf X}\in\mathbb{R}^{I_1\times I_2\times I_3}
\]
be a third-order tensor. Its frontal, lateral, and horizontal slices are
\[
\underline{\mathbf X}(:,:,i),\qquad
\underline{\mathbf X}(:,i,:),\qquad
\underline{\mathbf X}(i,:,:),
\]
respectively. Fibers are obtained by fixing all but one index; in particular,
\[
\underline{\mathbf X}(i,j,:)
\]
is called a \emph{tube}. In the t-product framework, the third mode is distinguished: it is the mode along which circular convolution is performed. For compactness, we write
\[
\underline{\mathbf X}^{(i)}:=\underline{\mathbf X}(:,:,i),\qquad i=1,\dots,I_3.
\]

A third-order tensor can also be viewed as a \emph{hyper-matrix}, that is, a matrix whose entries are tubes of length $I_3$. This viewpoint is useful because the t-product behaves like matrix multiplication in which scalar multiplication is replaced by circular convolution of tubes.

\begin{defn}[t-product {\cite{kilmer2011factorization}}]
Let
\[
\underline{\mathbf X}\in\mathbb{R}^{I_1\times I_2\times I_3},
\qquad
\underline{\mathbf Y}\in\mathbb{R}^{I_2\times I_4\times I_3}.
\]
Their t-product
\[
\underline{\mathbf C}=\underline{\mathbf X}*\underline{\mathbf Y}
\in\mathbb{R}^{I_1\times I_4\times I_3}
\]
is defined by
\begin{equation}\label{eq:tprod-bcirc}
\underline{\mathbf C}
=
\mathrm{fold}\!\left(
\mathrm{circ}(\underline{\mathbf X})\,
\mathrm{unfold}(\underline{\mathbf Y})
\right),
\end{equation}
where
\[
\mathrm{circ}(\underline{\mathbf X})
=
\begin{bmatrix}
\underline{\mathbf X}^{(1)} & \underline{\mathbf X}^{(I_3)} & \cdots & \underline{\mathbf X}^{(2)}\\
\underline{\mathbf X}^{(2)} & \underline{\mathbf X}^{(1)}   & \cdots & \underline{\mathbf X}^{(3)}\\
\vdots & \vdots & \ddots & \vdots\\
\underline{\mathbf X}^{(I_3)} & \underline{\mathbf X}^{(I_3-1)} & \cdots & \underline{\mathbf X}^{(1)}
\end{bmatrix},
\qquad
\mathrm{unfold}(\underline{\mathbf Y})
=
\begin{bmatrix}
\underline{\mathbf Y}^{(1)}\\
\underline{\mathbf Y}^{(2)}\\
\vdots\\
\underline{\mathbf Y}^{(I_3)}
\end{bmatrix},
\]
and $\mathrm{fold}$ is the inverse operation of $\mathrm{unfold}$.
\end{defn}

Equivalently, the t-product may be described entrywise in terms of circular convolution of tubes. If the $(i,j)$ tube of $\underline{\mathbf X}$ is convolved with the $(j,k)$ tube of $\underline{\mathbf Y}$ and summed over $j$, one obtains the $(i,k)$ tube of $\underline{\mathbf C}$. Hence, the t-product is a tensor analogue of matrix multiplication in which scalar multiplication is replaced by circular convolution of tubes. For two tubes $\mathbf x,\mathbf y\in\mathbb{R}^{I_3}$, their circular convolution is denoted by $\mathbf x\circledast \mathbf y$.

\subsection{Fourier-domain interpretation}

The t-product is computationally attractive because block-circulant matrices are block diagonalized by the discrete Fourier transform. Let
\[
\widehat{\underline{\mathbf X}}=\mathrm{fft}(\underline{\mathbf X},[],3),
\qquad
\widehat{\underline{\mathbf Y}}=\mathrm{fft}(\underline{\mathbf Y},[],3),
\]
and let $\mathbf F_{I_3}\in\mathbb{C}^{I_3\times I_3}$ denote the discrete Fourier transform matrix. Then
\[
(\mathbf F_{I_3}\otimes \mathbf I_{I_1})\,\mathrm{circ}(\underline{\mathbf X})\,(\mathbf F_{I_3}^{-1}\otimes \mathbf I_{I_2})
=
\mathrm{bdiag}\!\bigl(
\widehat{\underline{\mathbf X}}(:,:,1),\dots,\widehat{\underline{\mathbf X}}(:,:,I_3)
\bigr),
\]
where $\mathrm{bdiag}(\cdot)$ denotes the block-diagonal matrix formed from the frontal slices in the Fourier domain.

As a consequence, if
\[
\underline{\mathbf C}=\underline{\mathbf X}*\underline{\mathbf Y},
\]
then the t-product reduces to independent matrix multiplications across Fourier slices:
\begin{equation}\label{eq:tprod-fourier}
\widehat{\underline{\mathbf C}}(:,:,t)
=
\widehat{\underline{\mathbf X}}(:,:,t)\,
\widehat{\underline{\mathbf Y}}(:,:,t),
\qquad t=1,\dots,I_3.
\end{equation}
The result is then recovered by inverse FFT along the third mode.

For real tensors, the Fourier slices satisfy the usual conjugate-symmetry relations:
\[
\widehat{\underline{\mathbf X}}(:,:,1)\in\mathbb{R}^{I_1\times I_2},
\qquad
\widehat{\underline{\mathbf X}}(:,:,i)
=
\mathrm{conj}\!\bigl(\widehat{\underline{\mathbf X}}(:,:,I_3-i+2)\bigr),
\]
for the appropriate range of indices. Therefore, only about half of the Fourier slices need to be processed explicitly in practice. This Fourier-domain procedure is summarized in Algorithm~\ref{ALG:TSVDP}.

\begin{algorithm}[ht!]
\caption{Fast t-product of two third-order tensors}\label{ALG:TSVDP}
\KwIn{Two tensors $\underline{\mathbf X}\in\mathbb{R}^{I_1\times I_2\times I_3}$ and $\underline{\mathbf Y}\in\mathbb{R}^{I_2\times I_4\times I_3}$.}
\KwOut{$\underline{\mathbf C}=\underline{\mathbf X}*\underline{\mathbf Y}\in\mathbb{R}^{I_1\times I_4\times I_3}$.}
$\widehat{\underline{\mathbf X}}\leftarrow \mathrm{fft}(\underline{\mathbf X},[],3)$\;
$\widehat{\underline{\mathbf Y}}\leftarrow \mathrm{fft}(\underline{\mathbf Y},[],3)$\;
\For{$i=1,\dots,\lceil (I_3+1)/2\rceil$}{
    $\widehat{\underline{\mathbf C}}(:,:,i)\leftarrow
    \widehat{\underline{\mathbf X}}(:,:,i)\,\widehat{\underline{\mathbf Y}}(:,:,i)$\;
}
\For{$i=\lceil (I_3+1)/2\rceil+1,\dots,I_3$}{
    $\widehat{\underline{\mathbf C}}(:,:,i)\leftarrow
    \mathrm{conj}\!\bigl(\widehat{\underline{\mathbf C}}(:,:,I_3-i+2)\bigr)$\;
}
$\underline{\mathbf C}\leftarrow \mathrm{ifft}(\widehat{\underline{\mathbf C}},[],3)$\;
\end{algorithm}

We next recall the matrix-like notions such as transpose, identity, orthogonality, and singular value decomposition that are induced by the t-product algebra and will be used repeatedly in the sequel.

\subsection{Transpose, orthogonality, and T-SVD}

The matrix-like notions used in the t-product algebra are defined so that they are consistent with \eqref{eq:tprod-bcirc} and \eqref{eq:tprod-fourier}.

\begin{defn}[Transpose]
Let $\underline{\mathbf X}\in\mathbb{R}^{I_1\times I_2\times I_3}$. Its transpose $\underline{\mathbf X}^T\in\mathbb{R}^{I_2\times I_1\times I_3}$ is obtained by transposing each frontal slice and reversing the order of slices $2,\dots,I_3$. In the Fourier domain, this corresponds to slicewise conjugate transpose.
\end{defn}

\begin{defn}[Identity tensor]
The identity tensor $\underline{\mathbf I}\in\mathbb{R}^{I\times I\times I_3}$ is the tensor whose first frontal slice is the identity matrix and whose remaining frontal slices are zero.
\end{defn}

\begin{defn}[Orthogonal tensor]
A tensor $\underline{\mathbf Q}\in\mathbb{R}^{I\times I\times I_3}$ is orthogonal if
\[
\underline{\mathbf Q}^T*\underline{\mathbf Q}
=
\underline{\mathbf Q}*\underline{\mathbf Q}^T
=
\underline{\mathbf I}.
\]
\end{defn}

These definitions lead to the tensor singular value decomposition.

\begin{defn}[T-SVD]
Let $\underline{\mathbf X}\in\mathbb{R}^{I_1\times I_2\times I_3}$. A tensor singular value decomposition of $\underline{\mathbf X}$ is
\[
\underline{\mathbf X}
=
\underline{\mathbf U}*\underline{\mathbf S}*\underline{\mathbf V}^{T},
\]
where $\underline{\mathbf U}\in\mathbb{R}^{I_1\times I_1\times I_3}$ and
$\underline{\mathbf V}\in\mathbb{R}^{I_2\times I_2\times I_3}$ are orthogonal, and
$\underline{\mathbf S}\in\mathbb{R}^{I_1\times I_2\times I_3}$ is f-diagonal, meaning that each frontal slice of $\widehat{\underline{\mathbf S}}$ is diagonal.
\end{defn}

The T-SVD is computed in practice by taking the matrix SVD of each frontal slice of $\widehat{\underline{\mathbf X}}$. Truncating these slice-wise SVDs yields a low-tubal-rank approximation, called the \emph{truncated T-SVD}. This operation is the main local building block in the proposed TTT-SVD construction.

A fixed-precision variant can likewise be defined: given a tolerance $\delta>0$, one may choose the smallest tubal rank $R$ such that the truncated T-SVD approximation satisfies
\[
\|\underline{\mathbf X}-\underline{\mathbf U}_R*\underline{\mathbf S}_R*\underline{\mathbf V}_R^T\|_F\le \delta.
\]
We will not introduce separate notation for this variant, but this viewpoint motivates the tolerance-based formulations considered later in the paper.

\begin{algorithm}[t]
\caption{Truncated T-SVD of a third-order tensor}\label{ALG:TQR}
\KwIn{A tensor $\underline{\mathbf X}\in\mathbb{R}^{I_1\times I_2\times I_3}$ and a target tubal rank $R$.}
\KwOut{A rank-$R$ truncated T-SVD $\underline{\mathbf X}\approx \underline{\mathbf U}_R*\underline{\mathbf S}_R*\underline{\mathbf V}_R^T$.}
$\widehat{\underline{\mathbf X}}\leftarrow \mathrm{fft}(\underline{\mathbf X},[],3)$\;
\For{$i=1,\dots,\lceil (I_3+1)/2\rceil$}{
    $[\widehat{\underline{\mathbf U}}(:,:,i),\widehat{\underline{\mathbf S}}(:,:,i),\widehat{\underline{\mathbf V}}(:,:,i)]
    \leftarrow \mathrm{svds}\bigl(\widehat{\underline{\mathbf X}}(:,:,i),R\bigr)$\;
}
\For{$i=\lceil (I_3+1)/2\rceil+1,\dots,I_3$}{
    $\widehat{\underline{\mathbf U}}(:,:,i)\leftarrow \mathrm{conj}\!\bigl(\widehat{\underline{\mathbf U}}(:,:,I_3-i+2)\bigr)$\;
    $\widehat{\underline{\mathbf S}}(:,:,i)\leftarrow \widehat{\underline{\mathbf S}}(:,:,I_3-i+2)$\;
    $\widehat{\underline{\mathbf V}}(:,:,i)\leftarrow \mathrm{conj}\!\bigl(\widehat{\underline{\mathbf V}}(:,:,I_3-i+2)\bigr)$\;
}
$\underline{\mathbf U}_R\leftarrow \mathrm{ifft}(\widehat{\underline{\mathbf U}},[],3)$\;
$\underline{\mathbf S}_R\leftarrow \mathrm{ifft}(\widehat{\underline{\mathbf S}},[],3)$\;
$\underline{\mathbf V}_R\leftarrow \mathrm{ifft}(\widehat{\underline{\mathbf V}},[],3)$\;
\end{algorithm}

\subsection{Hyper-vectors, hyper-matrices, and hyper-tensors}

The t-product is naturally defined for third-order tensors. To use it inside higher-order tensor networks while preserving a distinguished tube mode, it is convenient to reinterpret tensors as arrays whose entries are themselves tubes.

A matrix $\mathbf A\in\mathbb{R}^{I\times T}$ may be viewed as a \emph{hyper-vector} of length $I$, whose entries are tubes of length $T$. Likewise, a third-order tensor
\[
\underline{\mathbf X}\in\mathbb{R}^{I\times J\times T}
\]
may be viewed as a \emph{hyper-matrix} of size $I\times J$ with tube length $T$. More generally, an order-$(N+1)$ tensor
\[
\underline{\mathbf X}\in\mathbb{R}^{I_1\times I_2\times \cdots \times I_N\times T}
\]
is viewed as an $N$-way \emph{hyper-tensor} whose last mode is the tube mode.

This viewpoint is not only a change of notation. It allows us to keep the last mode as the convolutional mode governed by the t-product while organizing the remaining modes through a train network. That is precisely the mechanism behind the proposed tubal tensor train.

\begin{defn}[Tubal outer product]
Let $\underaccent{\tilde}{\mathbf a}_n$ be hyper-vectors of lengths $I_n$ and common tube length $T$, for $n=1,\dots,N$. Their tubal outer product is the hyper-tensor
\[
\underaccent{\tilde}{\mathbf Y}
=
\underaccent{\tilde}{\mathbf a}_1\circ
\underaccent{\tilde}{\mathbf a}_2\circ \cdots \circ
\underaccent{\tilde}{\mathbf a}_N
\]
defined entrywise by
\[
\underaccent{\tilde}{\mathbf Y}(i_1,\dots,i_N)
=
\underaccent{\tilde}{\mathbf a}_1(i_1)\circledast
\underaccent{\tilde}{\mathbf a}_2(i_2)\circledast \cdots \circledast
\underaccent{\tilde}{\mathbf a}_N(i_N).
\]
\end{defn}

\section{Tensor-train background and motivation for TTT}\label{Sec:tSVD}

The proposed model is motivated by two complementary observations. On the one hand, T-SVD provides a natural algebra for third-order tensors through the t-product. On the other hand, the tensor-train format is one of the most effective ways to represent high-order tensors with low-order cores. The tubal tensor train combines these two ideas.

\newpage
\subsection{Tensor-train decomposition}

Let
\[
\underline{\mathbf X}\in\mathbb{R}^{I_1\times I_2\times\cdots\times I_N}.
\]
A tensor-train (TT) decomposition represents each entry of $\underline{\mathbf X}$ as
\[
\underline{\mathbf X}(i_1,\ldots,i_N)
=
\sum_{r_1=1}^{R_1}\cdots\sum_{r_{N-1}=1}^{R_{N-1}}
\underline{\mathbf G}^{(1)}(i_1,r_1)\,
\underline{\mathbf G}^{(2)}(r_1,i_2,r_2)\cdots
\underline{\mathbf G}^{(N)}(r_{N-1},i_N),
\]
where the cores satisfy
\[
\underline{\mathbf G}^{(n)}\in\mathbb{R}^{R_{n-1}\times I_n\times R_n},
\qquad
R_0=R_N=1.
\]
The tuple $(R_1,\dots,R_{N-1})$ is called the TT-rank.

The main strength of TT is that an $N$th-order tensor is represented using only low-order cores. For bounded ranks and mode sizes, the storage grows only linearly with $N$, which makes TT highly effective for high-dimensional data.

\subsection{Why T-SVD alone is not enough for higher-order tensors}\label{t-svd_sec}

For third-order tensors, T-SVD is attractive because it preserves many familiar matrix-SVD properties while exploiting the tube-wise convolution structure. In particular, it provides a natural low-rank approximation mechanism through the truncated T-SVD; see Figure~\ref{t_svd}. However, when one extends T-SVD directly to higher-order tensors, the resulting factors retain the same order as the original data tensor. This limits the scalability of the approach and leads to a curse-of-dimensionality bottleneck.

This suggests the following design principle: keep the t-product algebra attached to the tube mode, but avoid high-order global factors by organizing the remaining modes through a train structure. This is precisely the idea of the tubal tensor train introduced in the next section, where the TTT structure is defined and illustrated.

\begin{figure}[ht!]
\centering
\includegraphics[width=0.7\textwidth]{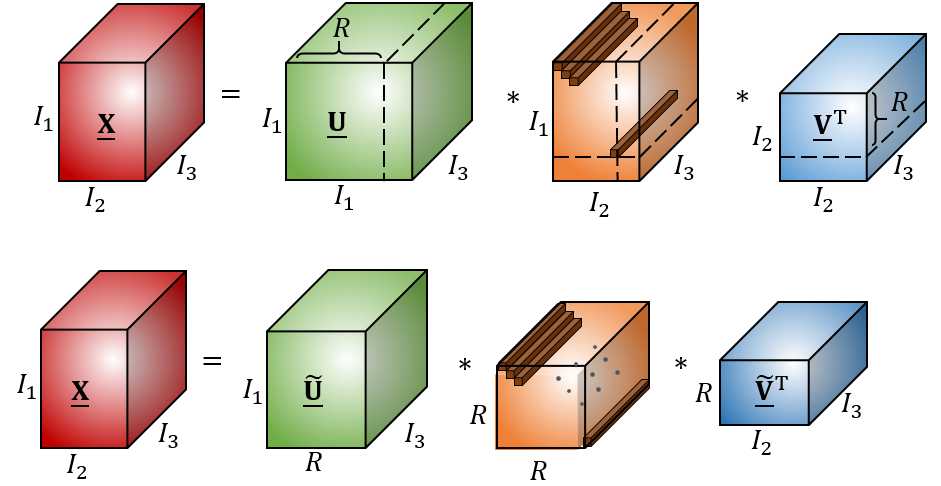}
\caption{T-SVD and truncated T-SVD for a third-order tensor.}
\label{t_svd}
\end{figure}

\section{Proposed Tubal Tensor-Train Decomposition}\label{Sec_TTT}

We now introduce the tubal tensor train (TTT) decomposition. The main idea is to represent a high-order tensor through a sequence of low-order tubal cores connected by the t-product. This keeps the convolutional structure induced by the tube mode while avoiding the high-order-core bottleneck of direct T-SVD extensions.

\subsection{Definition and interpretation}

Let $\underaccent{\tilde}{\mathbf X}$ be an $N$-way hyper-tensor of size $I_1\times I_2\times\cdots\times I_N$ with tube length $T$. A TTT representation with tubal ranks $(R_1,\dots,R_{N-1})$ consists of cores
\[
\underaccent{\tilde}{\mathbf X}^{(n)}\in\mathbb{R}^{R_{n-1}\times I_n\times R_n},
\qquad n=1,\dots,N,
\]
with $R_0=R_N=1$, such that
\begin{equation}\label{eq:ttt-sum}
\underaccent{\tilde}{\mathbf X}
=
\sum_{r_1=1}^{R_1}\cdots \sum_{r_{N-1}=1}^{R_{N-1}}
\underaccent{\tilde}{\mathbf X}^{(1)}(1,:,r_1)\circ
\underaccent{\tilde}{\mathbf X}^{(2)}(r_1,:,r_2)\circ \cdots \circ
\underaccent{\tilde}{\mathbf X}^{(N)}(r_{N-1},:,1).
\end{equation}
Equivalently, each tube of $\underaccent{\tilde}{\mathbf X}$ can be written as
\begin{equation}\label{eq:ttt-entry}
\underaccent{\tilde}{\mathbf X}(i_1,\dots,i_N)
=
\underaccent{\tilde}{\mathbf X}^{(1)}(1,i_1,:)
*
\underaccent{\tilde}{\mathbf X}^{(2)}(:,i_2,:)
*
\cdots
*
\underaccent{\tilde}{\mathbf X}^{(N)}(:,i_N,1).
\end{equation}

Thus, TTT may be viewed as a tubal matrix product state. The boundary cores are third-order tensors and the interior cores are fourth-order tensors when the explicit tube mode is displayed. More precisely, the total number of stored entries is
\[
R_1 I_1 T+\sum_{n=2}^{N-1} R_{n-1} I_n R_n\,T+R_{N-1} I_N T.
\]
In the uniform case $I_n=I$ and $R_n=R$, this becomes
\[
2IRT+(N-2)IR^2T=\mathcal{O}(NIR^2T).
\]
This linear scaling with the number of modes is one of the main advantages of TTT over direct higher-order extensions of T-SVD.

\begin{figure}[ht!]
\centering
\includegraphics[width=0.70\textwidth,trim=1cm 0 0 0,clip]{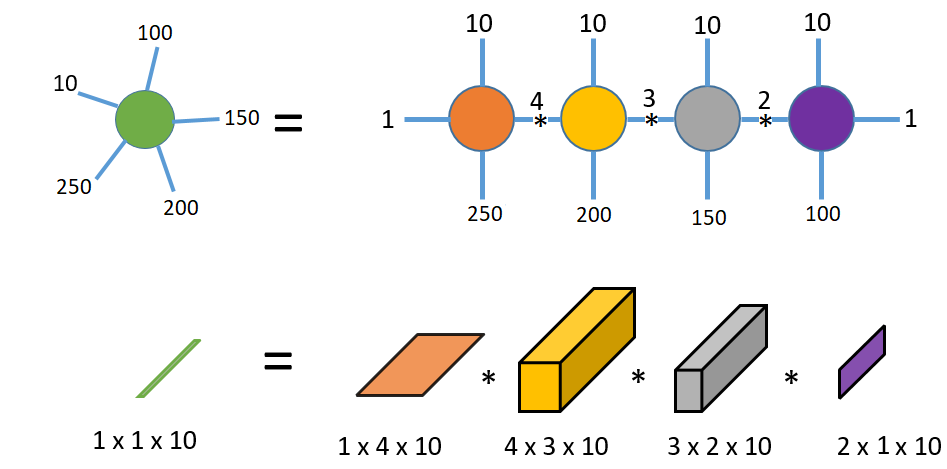}
\caption{Tubal tensor train structure and tube-wise evaluation.}
\label{TT_y}
\end{figure}

As an illustration, consider a tensor of size $100\times 150\times 200\times 250\times 10$, where the last mode is the tube mode. If the internal tubal rank profile is $(R_1,R_2,R_3)=(4,3,2)$ (equivalently, the full boundary-augmented profile is $(R_0,R_1,R_2,R_3,R_4)=(1,4,3,2,1)$), then the corresponding TTT cores have sizes
\[
1\times 100\times 4,\qquad
4\times 150\times 3,\qquad
3\times 200\times 2,\qquad
2\times 250\times 1,
\]
all with tube length $10$. Figure~\ref{TT_y} visualizes both the network structure and the tube-wise evaluation rule in \eqref{eq:ttt-entry}. We write
\[
\ll \underaccent{\tilde}{\mathbf Y}^{(1)},\dots,\underaccent{\tilde}{\mathbf Y}^{(N)}\gg
\]
for the TTT contraction of a sequence of tubal cores. Additional graphical illustrations of the TT structure, the truncated T-SVD, and the successive reshape--truncation steps underlying TTT-SVD are provided in Appendix~\ref{app:more_tt_ttt_algo}.

\subsection{A sequential fixed-rank construction: TTT-SVD}

For a prescribed tubal rank profile $(r_1,\dots,r_{N-1})$, the TTT decomposition can be computed by a sequential algorithm modeled after TT-SVD. The difference is that the unfolding step of TT-SVD is replaced by reshaping the current tensor into a third-order tensor, followed by a truncated T-SVD.

\begin{algorithm}[t]
\caption{TTT-SVD (fixed-rank version)}\label{Alg_tttr}
\KwIn{A hyper-tensor $\underaccent{\tilde}{\mathbf X}$ of size $I_1\times \cdots \times I_N$ with tube length $T$, and a tubal rank profile $(r_1,\dots,r_{N-1})$.}
\KwOut{Cores $\underaccent{\tilde}{\mathbf Y}^{(1)},\dots,\underaccent{\tilde}{\mathbf Y}^{(N)}$ such that $\underaccent{\tilde}{\mathbf X}\approx\ll \underaccent{\tilde}{\mathbf Y}^{(1)},\dots,\underaccent{\tilde}{\mathbf Y}^{(N)}\gg$.}
$\underaccent{\tilde}{\mathbf C}\leftarrow \mathrm{reshape}(\underaccent{\tilde}{\mathbf X},[I_1,\,I_2\cdots I_N,\,T])$\;
$[\underaccent{\tilde}{\mathbf U},\underaccent{\tilde}{\mathbf S},\underaccent{\tilde}{\mathbf V}]
\leftarrow \mathrm{truncated\_TSVD}(\underaccent{\tilde}{\mathbf C},r_1)$\;
$\underaccent{\tilde}{\mathbf Y}^{(1)}\leftarrow \underaccent{\tilde}{\mathbf U}$\;
$\underaccent{\tilde}{\mathbf C}\leftarrow \underaccent{\tilde}{\mathbf S}*\underaccent{\tilde}{\mathbf V}^{T}$\;
\For{$n=2,\dots,N-1$}{
    $\underaccent{\tilde}{\mathbf C}\leftarrow \mathrm{reshape}(\underaccent{\tilde}{\mathbf C},[r_{n-1}I_n,\,I_{n+1}\cdots I_N,\,T])$\;
    $[\underaccent{\tilde}{\mathbf U},\underaccent{\tilde}{\mathbf S},\underaccent{\tilde}{\mathbf V}]
    \leftarrow \mathrm{truncated\_TSVD}(\underaccent{\tilde}{\mathbf C},r_n)$\;
    $\underaccent{\tilde}{\mathbf Y}^{(n)}\leftarrow \mathrm{reshape}(\underaccent{\tilde}{\mathbf U},[r_{n-1},\,I_n,\,r_n,\,T])$\;
    $\underaccent{\tilde}{\mathbf C}\leftarrow \underaccent{\tilde}{\mathbf S}*\underaccent{\tilde}{\mathbf V}^{T}$\;
}
$\underaccent{\tilde}{\mathbf Y}^{(N)}\leftarrow \mathrm{reshape}(\underaccent{\tilde}{\mathbf C},[r_{N-1},\,I_N,\,T])$\;
\end{algorithm}

A fixed-precision variant is obtained by replacing the prescribed-rank truncations by local tolerances. Randomized T-SVD routines \cite{zhang2018randomized} or slice/CUR-type accelerations \cite{tarzanagh2018fast,ahmadi2024robust,ahmadi2023fastt,ahmadi2024adaptive} can also be incorporated. A schematic visualization of the sequential reshape--truncated-T-SVD procedure is given in Appendix~\ref{app:more_tt_ttt_algo}; see Figure~\ref{tsvd_demo}.

A practical limitation of TTT-SVD, as in TT-SVD, is that the sequential rank decisions may lead to unbalanced cores and unnecessarily large parameter counts for a target approximation error. This motivates the second algorithm below.

\subsection{A Fourier-domain alternating scheme: TATCU}

Consider the constrained approximation problem
\begin{equation}\label{EQ_1}
\min_{\underaccent{\tilde}{\mathbf Y}}\;
\|\underaccent{\tilde}{\mathbf X}-\underaccent{\tilde}{\mathbf Y}\|_F
\qquad
\text{subject to } \underaccent{\tilde}{\mathbf Y}
=
\ll \underaccent{\tilde}{\mathbf Y}^{(1)},\dots,\underaccent{\tilde}{\mathbf Y}^{(N)}\gg.
\end{equation}
In the TT setting, alternatives to TT-SVD such as ALS, DMRG, and ATCU often produce more balanced decompositions when the target is an error tolerance rather than a fixed rank profile \cite{holtz2012alternating,white1993density,khoromskij2010dmrg+,phan2020tensor}. Our TATCU strategy transfers this idea to the tubal setting through the Fourier domain.

The key observation is that the FFT along the tube mode turns the t-product into ordinary matrix multiplication on each frequency slice. Hence, after transforming $\underaccent{\tilde}{\mathbf X}$ along its tube mode, the TTT approximation problem decouples into a family of standard TT approximation problems, one for each Fourier slice. We therefore apply ATCU slice-wise in the Fourier domain, while distributing the prescribed global error tolerance across the frequency slices. The resulting slice-wise TT cores are then synchronized to a common spectral rank profile and assembled into tubal cores by inverse FFT.

\begin{algorithm}[t]
\caption{TATCU (Fourier-slice alternating two-cores update)}\label{Alg_tascu}
\KwIn{A hyper-tensor $\underaccent{\tilde}{\mathbf X}$ of size $I_1\times\cdots\times I_N$ with tube length $T$, and a target global relative tolerance $\varepsilon$.}
\KwOut{A TTT approximation $\underaccent{\tilde}{\mathbf Y}=\ll \underaccent{\tilde}{\mathbf Y}^{(1)},\dots,\underaccent{\tilde}{\mathbf Y}^{(N)}\gg$ obtained by slice-wise ATCU in the Fourier domain, targeting the global relative tolerance $\varepsilon$.}

$\widehat{\underline{\mathbf X}}\leftarrow \mathrm{fft}(\underaccent{\tilde}{\mathbf X},[],N+1)$\;
allocate slice-wise error budgets $\eta_k$ so that their total is consistent with the global tolerance $\varepsilon$ (for example, proportionally to the slice energies)\;
\For{$k=1,\dots,T$}{
    compute a TT approximation of the $k$th Fourier slice
    \[
    \widehat{\underline{\mathbf X}}(:,:,\dots,:,k)\approx
    \ll \mathbf C_k^{(1)},\mathbf C_k^{(2)},\dots,\mathbf C_k^{(N)}\gg
    \]
    using ATCU with local tolerance $\eta_k$\;
}
choose a common TT rank profile across frequencies, for example by taking the componentwise maximum and zero-padding smaller slice-wise cores\;
stack the padded spectral TT cores along the frequency index to form $\widehat{\underline{\mathbf Y}}^{(j)}$ for each $j$\;
$\underaccent{\tilde}{\mathbf Y}^{(1)}\leftarrow \mathrm{ifft}(\widehat{\underline{\mathbf Y}}^{(1)},[],3)$\;
\For{$j=2,\dots,N-1$}{
    $\underaccent{\tilde}{\mathbf Y}^{(j)}\leftarrow \mathrm{ifft}(\widehat{\underline{\mathbf Y}}^{(j)},[],4)$\;
}
$\underaccent{\tilde}{\mathbf Y}^{(N)}\leftarrow \mathrm{ifft}(\widehat{\underline{\mathbf Y}}^{(N)},[],3)$\;
verify the final global relative error and, if necessary, refine the slice-wise budgets or ranks and repeat the slice-wise approximation step\;
\end{algorithm}

\begin{rem}
The rank synchronization step in Algorithm~\ref{Alg_tascu} is essential. If ATCU is run independently on each Fourier slice, the resulting TT ranks may differ across frequencies. To assemble the slice-wise TT cores into tubal cores, one must therefore enforce a common spectral rank profile, for instance by taking the componentwise maximum and zero-padding the smaller slice-wise cores. Moreover, a prescribed global relative tolerance cannot in general be enforced by assigning the same local tolerance to every Fourier slice. In practice, one should distribute the global error budget across the slices, for example using Parseval's identity and the slice energies, and then verify the resulting global reconstruction error after inverse FFT. In this way, TATCU can be implemented as a globally tolerance-driven method while retaining its slice-wise ATCU structure.
\end{rem}

%
%

\subsection{Approximation error of TTT-SVD}

We now state the standard TT-SVD-type error bound for Algorithm~\ref{Alg_tttr}.

\begin{thm}\label{thm_1}
Consider Algorithm~\ref{Alg_tttr}. Suppose that at step $n$ the truncated T-SVD of the current hyper-matrix $\underaccent{\tilde}{\mathbf C}_n$ produces a local error bounded by
\[
\bigl\|\underaccent{\tilde}{\mathbf C}_n
-
\underaccent{\tilde}{\mathbf U}_n*
\underaccent{\tilde}{\mathbf S}_n*
\underaccent{\tilde}{\mathbf V}_n^{T}\bigr\|_F
\le \delta_n,
\qquad n=1,\dots,N-1.
\]
Then the final TTT approximation $\underaccent{\tilde}{\mathbf Y}$ returned by Algorithm~\ref{Alg_tttr} satisfies
\[
\|\underaccent{\tilde}{\mathbf X}-\underaccent{\tilde}{\mathbf Y}\|_F^2
\le
\sum_{n=1}^{N-1}\delta_n^2.
\]
\end{thm}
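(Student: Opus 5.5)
The argument follows the TT-SVD error analysis of Oseledets, carried out in the Fourier domain or directly with t-product orthogonality. The basic tool is the Pythagorean identity for the t-product. If $\underaccent{\tilde}{\mathbf U}$ has t-orthonormal columns, i.e.\ $\underaccent{\tilde}{\mathbf U}^T*\underaccent{\tilde}{\mathbf U}=\underline{\mathbf I}$, then for any $\underaccent{\tilde}{\mathbf A},\underaccent{\tilde}{\mathbf B}$ of compatible size we have
\[
\|\underaccent{\tilde}{\mathbf U}*\underaccent{\tilde}{\mathbf A}\|_F=\|\underaccent{\tilde}{\mathbf A}\|_F
\quad\text{and}\quad
\|\underaccent{\tilde}{\mathbf B}\|_F^2=\|\underaccent{\tilde}{\mathbf U}*\underaccent{\tilde}{\mathbf U}^T*\underaccent{\tilde}{\mathbf B}\|_F^2+\|(\underline{\mathbf I}-\underaccent{\tilde}{\mathbf U}*\underaccent{\tilde}{\mathbf U}^T)*\underaccent{\tilde}{\mathbf B}\|_F^2 .
\]
Both facts follow from Parseval, $\|\underline{\mathbf X}\|_F^2=\frac1T\|\widehat{\underline{\mathbf X}}\|_F^2$, together with (\ref{eq:tprod-fourier}): in each Fourier slice $\widehat{\underaccent{\tilde}{\mathbf U}}(:,:,t)$ has orthonormal columns, so the matrix identities hold slice by slice, and summing over $t$ gives the tensor versions. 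These identities survive the reshapes because reshaping leading (non-tube) modes commutes with the FFT along the tube mode and preserves the Frobenius norm.

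Next I set up the induction. Let $\underaccent{\tilde}{\mathbf C}_1$ be the first reshape of $\underaccent{\tilde}{\mathbf X}$. Let $\underaccent{\tilde}{\mathbf C}_{n+1}$ be the reshape of $\underaccent{\tilde}{\mathbf S}_n*\underaccent{\tilde}{\mathbf V}_n^T=\underaccent{\tilde}{\mathbf U}_n^T*\underaccent{\tilde}{\mathbf C}_n$; this equality holds because the truncated T-SVD is an orthogonal projection in each Fourier slice. Let $\underaccent{\tilde}{\mathbf Y}_{>n}$ be the TTT tensor built from the cores produced from $\underaccent{\tilde}{\mathbf C}_{n+1}$ onward. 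Write $\underaccent{\tilde}{\mathbf P}_n=\underaccent{\tilde}{\mathbf U}_n*\underaccent{\tilde}{\mathbf U}_n^T$, acting on the row (hyper-matrix) index of $\underaccent{\tilde}{\mathbf C}_n$. In the unfolding at step $n$, the output has the form $\underaccent{\tilde}{\mathbf Y}_{\ge n}=\underaccent{\tilde}{\mathbf U}_n*\underaccent{\tilde}{\mathbf Y}_{>n}$, with $\underaccent{\tilde}{\mathbf Y}_{>n}$ reshaped to a hyper-matrix. Therefore
\[
\underaccent{\tilde}{\mathbf C}_n-\underaccent{\tilde}{\mathbf Y}_{\ge n}
=(\underline{\mathbf I}-\underaccent{\tilde}{\mathbf P}_n)*\underaccent{\tilde}{\mathbf C}_n
+\underaccent{\tilde}{\mathbf U}_n*\bigl(\underaccent{\tilde}{\mathbf U}_n^T*\underaccent{\tilde}{\mathbf C}_n-\underaccent{\tilde}{\mathbf Y}_{>n}\bigr).
\]
The first term lies in the range of $\underline{\mathbf I}-\underaccent{\tilde}{\mathbf P}_n$ and the second in the range of $\underaccent{\tilde}{\mathbf P}_n$, so the Pythagorean identity and the isometry property give
\[
\|\underaccent{\tilde}{\mathbf C}_n-\underaccent{\tilde}{\mathbf Y}_{\ge n}\|_F^2\le\delta_n^2+\|\underaccent{\tilde}{\mathbf C}_{n+1}-\underaccent{\tilde}{\mathbf Y}_{>n}\|_F^2 .
\]

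Inducting from $n=1$ to $N-1$ completes the argument. At the last step, $\underaccent{\tilde}{\mathbf Y}^{(N)}$ is exactly the reshape of $\underaccent{\tilde}{\mathbf C}_N$, so the final residual is zero and we obtain $\sum_n\delta_n^2$. The step I expect to need the most care is checking that $\underaccent{\tilde}{\mathbf Y}_{\ge n}$ really factors as $\underaccent{\tilde}{\mathbf U}_n*(\cdot)$ under the chosen reshapes. This means verifying that the reshape $[r_{n-1}I_n,\,I_{n+1}\cdots I_N,\,T]$ is consistent with the entrywise t-product chain (\ref{eq:ttt-entry}), so that the TTT contraction of the remaining cores, unfolded along modes $(r_{n-1},i_n)$, equals the hyper-matrix t-product of the $n$th core with the tail. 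I would check this slice-wise in the Fourier domain, where it reduces verbatim to the TT-SVD index bookkeeping for each frequency $t$, using that reshapes commute with the FFT along the tube mode.
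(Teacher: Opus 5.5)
Your proof is correct and is essentially the paper's argument: both rest on the FFT block-diagonalization of the t-product, Parseval's identity, and the Oseledets orthogonal-projection induction for TT-SVD. The only difference is bookkeeping. The paper runs the classical TT-SVD analysis separately on each Fourier slice and then sums the slice-wise bounds via Parseval, whereas you first lift the slice-wise Pythagorean and isometry identities to the t-product algebra and then run a single induction directly in terms of the global local errors $\delta_n$.
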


\begin{proof}
Apply the FFT along the tube mode. By Parseval's identity, the Frobenius norm is preserved up to a constant scaling, so it is enough to work in the Fourier domain. In that domain, each truncated T-SVD in Algorithm~\ref{Alg_tttr} becomes a collection of independent truncated matrix SVDs on the frontal slices. Moreover, the reshape operations used in Algorithm~\ref{Alg_tttr} only reorganize the non-tube modes and therefore commute with the slice-wise Fourier decomposition. Consequently, for each frequency slice, the entire TTT-SVD procedure is exactly the classical TT-SVD algorithm applied to the corresponding slice of $\widehat{\underaccent{\tilde}{\mathbf X}}$. The standard TT-SVD error analysis \cite{oseledets2011tensor} then yields, on each slice, a squared-error bound given by the sum of the squared local truncation errors. Summing these bounds over all frequencies and applying Parseval's identity once more gives the stated inequality in the original domain.
\end{proof}


As in the classical TT setting \cite{oseledets2011tensor,holtz2012alternating}, the admissible set of tensors whose TTT ranks do not exceed a prescribed profile is closed. Indeed, after applying the FFT along the tube mode, this class corresponds to a slice-wise family of tensors with bounded TT ranks, and the inverse FFT preserves closedness. Therefore, a best approximation exists. Let $\underaccent{\tilde}{\mathbf Y}^{\rm best}$ denote such a best approximation. By combining Theorem~\ref{thm_1} with the standard TT-SVD quasi-optimality argument applied slice-wise in the Fourier domain, one obtains
\[
\|\underaccent{\tilde}{\mathbf X}-\underaccent{\tilde}{\mathbf Y}\|_F
\le \sqrt{N-1}\,\|\underaccent{\tilde}{\mathbf X}-\underaccent{\tilde}{\mathbf Y}^{\rm best}\|_F.
\]

\section{Simulations}\label{Sec:Sim}
This section reports numerical experiments comparing the proposed model with several baseline algorithms.

All algorithms were implemented in MATLAB on a laptop computer with a 2.60~GHz Intel(R) Core(TM) i7-5600U processor and 8~GB of memory. The corresponding MATLAB codes are available at \url{https://github.com/TTTmodelICLR24/TTT_MatLab_ICLR24} and can be used to reproduce the experiments reported below. We also provide a Python implementation of TTT-SVD and TATCU at \url{https://github.com/vleplat/ttt_package}, together with lightweight demonstration scripts, including a synthetic chromatic gesture video compression example.

\begin{exa}\label{ex_1}
 ({\bf Color images}) In this experiment, we focus on color images as real-world data tensors. The peak signal-to-noise ratio (PSNR), structural similarity index measure (SSIM), and mean squared error (MSE) are used to assess reconstruction quality. They are defined by
\[
\mathrm{MSE}=\frac{\|\underline{\mathbf X}-\underline{\mathbf Y}\|_F^2}{\mathrm{numel}(\underline{\mathbf X})},
\qquad
\mathrm{PSNR}=10\log_{10}\!\left(\frac{255^2}{\mathrm{MSE}}\right),
\]
while SSIM is computed in the standard patch-based way,
\[
\mathrm{SSIM}=\frac{(2\mu_x\mu_y+C_1)(2\sigma_{xy}+C_2)}{(\mu_x^2+\mu_y^2+C_1)(\sigma_x^2+\sigma_y^2+C_2)}.
\]
We also report the relative error
\[
\frac{\|\underline{\mathbf X}-\underline{\mathbf Y}\|_F}{\|\underline{\mathbf X}\|_F}.
\]

We consider eight color images of size $512\times 512\times 3$ shown in Figure~\ref{sample_im}. We reshape the images to order-10 tensors of size $\underbrace{4\times\cdots\times 4}_{9}\times3$. We use the same relative error bound of $0.15$ for the TT-based model and the proposed TTT model, and compute the corresponding tensor decompositions. Table~\ref{Table_viimage} reports the PSNR, SSIM, and MSE values of the reconstructed images obtained by the two approaches. The reconstructed images themselves are displayed in Figure~\ref{resul_1}.

The proposed TTT model consistently improves the reconstruction quality of the benchmark images, as reflected by higher PSNR and SSIM values together with lower MSE. Visually, it better preserves both background content and structural details. More importantly, unlike direct higher-order extensions of T-SVD, the TTT representation uses only low-order cores, which avoids the high-order-core bottleneck.

 \begin{figure}[ht!]
\includegraphics[width=.95\linewidth]{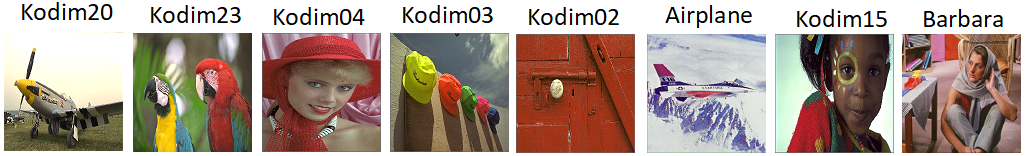}
\centering
\caption{\small Benchmark images used in our experiments.}
\label{sample_im}
\end{figure}

 \begin{table}
\begin{center}
\caption{\small The experimental results obtained by the proposed algorithm and the TT-based method for images and error bound of $0.15$.}\label{Table_viimage}
\vspace{0.2cm}
{\small
\begin{tabular}{c| r r r | r r r}
 \hline
  & \multicolumn{3}{c}{TT-Based} & \multicolumn{3}{c}{Proposed method}\\\hline
Images & MSE & PSNR (dB) & SSIM & MSE &  PSNR (dB) & SSIM\\
 \hline\hline
 {\rm Kodim03}& 251 & 24.12  & 0.6649 & {\bf 114.09} &  {\bf 27.52} & {\bf 0.7806} \\
 {\rm Kodim23}&  251.40 &  24.13 & 0.7042 & {\bf 127.27} & {\bf 27.08} & {\bf 0.8096}\\
  {\rm Kodim04}&  239.25 & 24.34 & 0.5940 & {\bf 93.46} & {\bf 28.42} & {\bf 0.7117}\\
    {\rm Airplane}& 874.30  & 18.71  & 0.6081 & {\bf 351.47} &{\bf 22.67} & {\bf 0.6921}\\
  {\rm Kodim15}& 380.32  & 22.33 & 0.6178 & {\bf 159.13} & {\bf 26.11} & {\bf 0.7673}\\
      {\rm Kodim20}& 436.51 &  21.73 & 0.6777 & {\bf 300.99} & {\bf 23.35} & {\bf 0.7171}\\
  {\rm Barbara}& 299.44 & 23.37  & 0.5746 & {\bf 121.21} & {\bf 27.30} & {\bf 0.7367}\\
  {\rm Kodim02}& 141.48 & 26.62 & 0.6692 & {\bf 66.56} & {\bf 29.90} & {\bf 0.7784}\\
 \hline
\end{tabular}
}
\end{center}
\end{table}

 \begin{figure}[ht!]
\includegraphics[width=.95\linewidth]{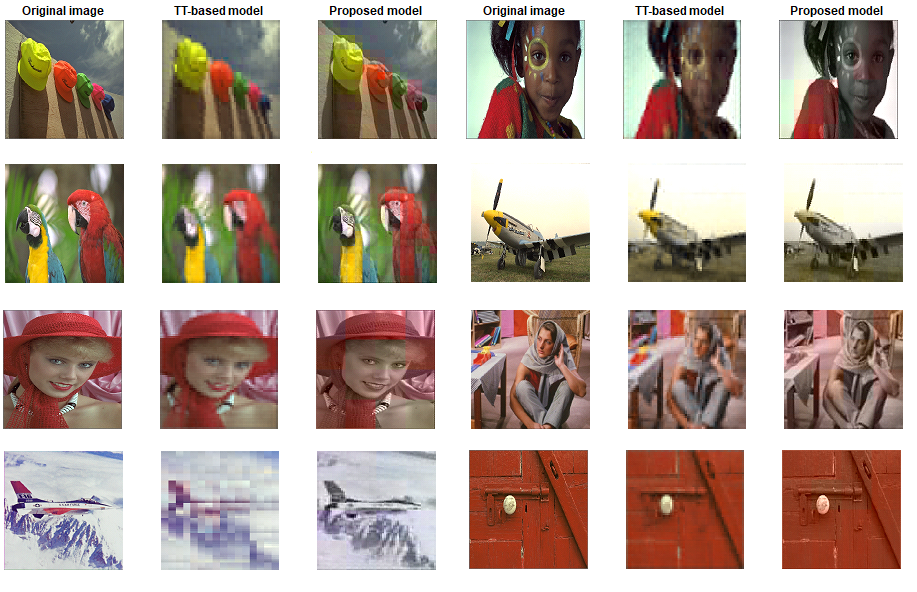}
\centering
\caption{\small The reconstructed images using the proposed TTT model and the TT-based model for an upper error bound of $0.15$.}
\label{resul_1}
\end{figure}

\end{exa}

\paragraph{Additional comparison with tensor chain.}
To strengthen the image-compression comparison, we also tested the tensor chain (TC) model \cite{espig2012note,zhao2016tensor}, which closes the TT network into a ring through an auxiliary index and is often more expressive than TT. Using the public implementation from \url{https://github.com/oscarmickelin/tensor-ring-decomposition}, we computed TTT, TT, and TC approximations for seven representative benchmark RGB images from Example~\ref{ex_1} with tolerance $0.15$. The resulting PSNR values are reported in Table~\ref{tab:comp}. As expected, TC improves over TT on most images, but it remains less competitive than the proposed TTT model.

\begin{table}[ht!]
\centering
\caption{PSNR values obtained by TTT, TT, and TC for tolerance $0.15$ on seven representative benchmark RGB images.}
\label{tab:comp}
\setlength{\tabcolsep}{4pt}
\resizebox{\textwidth}{!}{%
\begin{tabular}{c|ccccccc}
\hline
Method & Airplane & Barbara & Kodim02 & Kodim03 & Kodim04 & Kodim15 & Kodim20 \\
\hline
TTT & 22.67 & 27.30 & 29.90 & 27.52 & 28.42 & 26.11 & 23.35 \\
TT  & 18.71 & 23.37 & 26.62 & 24.12 & 24.34 & 22.33 & 21.73 \\
TC  & 20.85 & 26.01 & 27.78 & 26.34 & 26.75 & 25.04 & 21.17 \\
\hline
\end{tabular}%
}
\end{table}

\begin{exa}
 ({\bf Videos}) In this example, we consider the video data sets ``Akiyo'', ``News'', ``Tempete'', ``Waterfall'', ``Foreman'', and ``Stephan'' from \url{http://trace.eas.asu.edu/yuv/}. The proposed tensor model is compared with the TT decomposition and the T-SVD model in terms of compression factor and running time. Here, the compression factor is defined as the ratio between the number of entries of the original tensor and the number of stored parameters in the compressed representation. All videos have size $176\times 144\times 300$. We first reshape them into 10th-order tensors of size $4\times 4\times 9\times 4\times 4\times 11\times 3\times 4\times 5\times 5$. The experiments are divided into two parts. In the first part, we use the first three videos (``Akiyo'', ``News'', and ``Tempete'') and compare the proposed TTT model with the TT-based method under the same relative approximation error bound of $0.1$. The resulting compression factors and running times are reported in Table~\ref{Table_video}. The proposed method is faster on all three videos; moreover, for ``Akiyo\_qcif'' it yields a much larger compression factor, while for ``News\_qcif'' the compression factor is slightly lower than that of TT. The PSNR and SSIM values of all frames of the ``Akiyo\_qcif'' and ``News\_qcif'' videos are shown in Figure~\ref{video_1}. Overall, these results show that the proposed TTT model can provide comparable, and sometimes better, reconstruction quality for the same relative approximation error in less computing time.

 In the second part, we examine the three videos ``Waterfall'', ``Foreman'', and ``Stephan'' using the proposed TTT model and the T-SVD baseline with a relative error bound of $0.1$. The results are presented in Table~\ref{Table_video_2}. Compared with T-SVD, the proposed TTT model achieves significantly larger compression factors, albeit at a higher computational cost. These findings further support the relevance of the proposed model for video compression.

\begin{table}[ht!]
\centering
\caption{Experimental results for the proposed TTT algorithm and the TT-based baseline on the video data sets.}\label{Table_video}
\setlength{\tabcolsep}{4pt}
\resizebox{\textwidth}{!}{%
\begin{tabular}{c|ccc|ccc}
\hline
& \multicolumn{3}{c|}{TT-based model} & \multicolumn{3}{c}{Proposed method}\\
Video & Rel. err. & Time (s) & Comp. factor & Rel. err. & Time (s) & Comp. factor\\
\hline\hline
News\_qcif   & 0.1 & 23.80 & \textbf{13.34} & 0.1 & \textbf{21.73} & 12.44\\
Akiyo\_qcif  & 0.1 & 27.49 & 7.68           & 0.1 & \textbf{22.30} & \textbf{64.64}\\
Tempete\_cif & 0.1 & 32.16 & 2.3283         & 0.1 & \textbf{30.33} & \textbf{2.3284}\\
\hline
\end{tabular}%
}
\end{table}

\begin{table}[ht!]
\centering
\caption{Experimental results for the proposed TTT algorithm and the T-SVD baseline on the video data sets.}\label{Table_video_2}
\setlength{\tabcolsep}{4pt}
\resizebox{\textwidth}{!}{%
\begin{tabular}{c|ccc|ccc}
\hline
& \multicolumn{3}{c|}{T-SVD} & \multicolumn{3}{c}{Proposed method}\\
Video & Rel. err. & Time (s) & Comp. factor & Rel. err. & Time (s) & Comp. factor\\
\hline\hline
Waterfall\_cif & 0.1 & 10.34 & 5.04 & 0.1 & 20.73 & \textbf{20.10}\\
Foreman\_qcif  & 0.1 & 9.34  & 4.42 & 0.1 & 24.56 & \textbf{11.44}\\
Stephan\_qcif  & 0.1 & 11.28 & 2.50 & 0.1 & 24.90 & \textbf{2.53}\\
\hline
\end{tabular}%
}
\end{table}

\begin{figure}[ht!]
\includegraphics[width=0.48\linewidth]{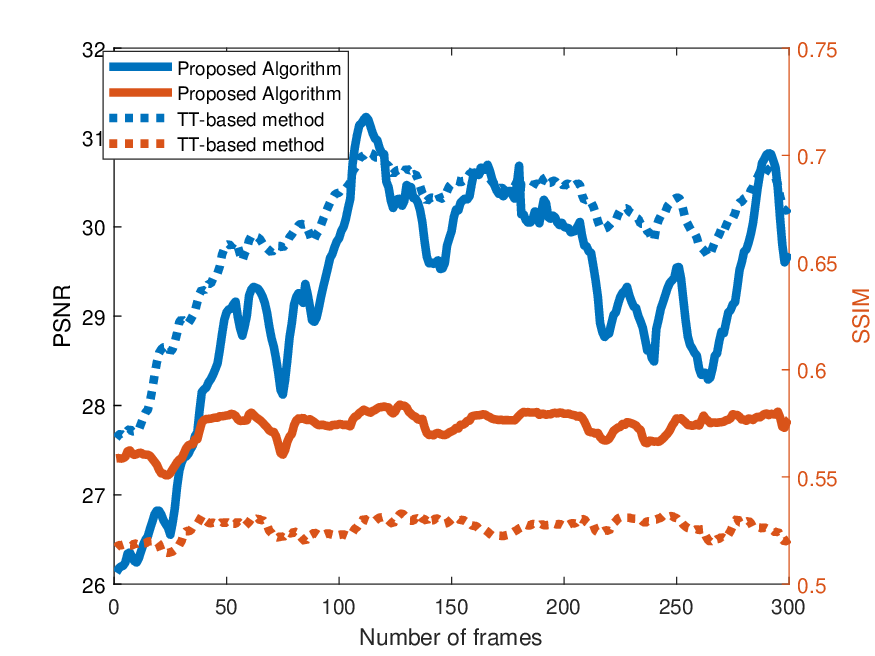}
\includegraphics[width=0.48\linewidth]{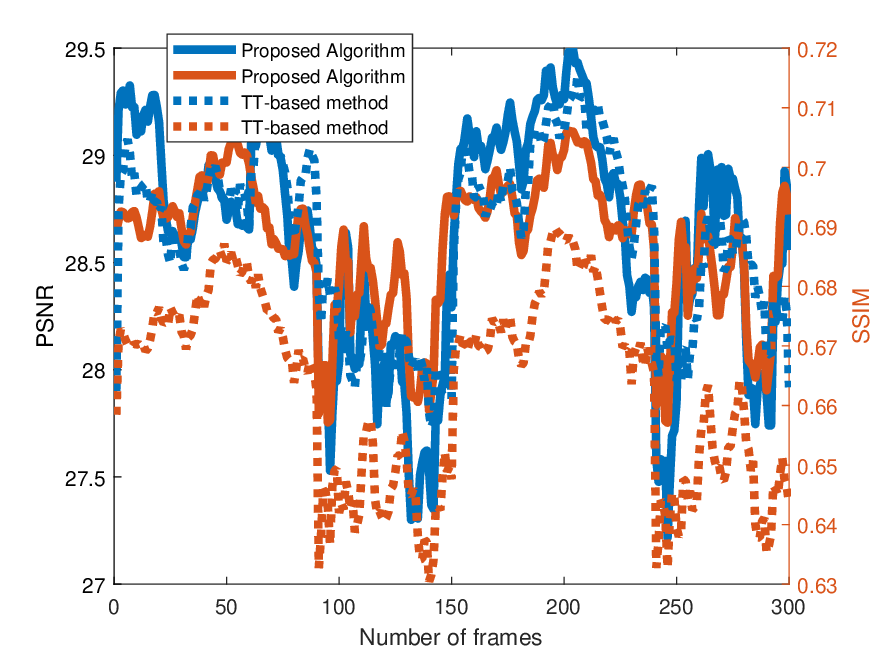}

\centering
\caption{\small The PSNR and SSIM of the reconstructed frames by the proposed TTT model and TT decomposition for the relative approximation error bound of $0.1$ for the ``Akiyo'' video (left) and the ``News'' video datasets.}
\label{video_1}
\end{figure}

\end{exa}

\begin{exa}\label{compl_exa}
({\bf Application to tensor completion}) We also examine the proposed model on a tensor completion task. Missing entries are common in practice, either because of acquisition failures or because only a subset of the data can be observed. Completion methods exploit low-rank structure to infer the missing values from the observed ones. We adopt the tensor completion scheme from \cite{ahmadi2023fast}, namely
\begin{equation}\label{Step1}
\underline{\mathbf X}^{(n)}\leftarrow \mathcal{L}(\underline{\mathbf C}^{(n)}),
\end{equation}
\begin{equation}\label{Step2}
\underline{\mathbf C}^{(n+1)}\leftarrow\underline{\mathbf \Omega}\oast\underline{\mathbf M}+(\underline{\mathbf 1}-\underline{\mathbf \Omega})\oast\underline{\mathbf X}^{(n)},
\end{equation}
where $\mathcal{L}$ computes a low-rank approximation of the current iterate $\underline{\mathbf C}^{(n)}$, $\underline{\mathbf 1}$ is the all-ones tensor, $\underline{\mathbf M}$ is the observed data tensor, and the indicator tensor $\underline{\mathbf \Omega}$ stores the positions of observed entries. In our experiments, $\mathcal{L}$ is instantiated either by a low-tubal-rank T-SVD approximation or by a low-TTT-rank approximation computed with the proposed model. We reshape the data tensor into an order-10 tensor of size $4\times 4\times 9\times 4\times 4\times 11\times 3\times 4\times 5\times 5$ and remove $70\%$ of its entries uniformly at random. The corresponding full boundary-augmented TTT rank profile is $[1,2,6,14,14,14,14,14,4,1]$ (equivalently, the internal ranks are $(2,6,14,14,14,14,14,4)$). Several tubal ranks are tested for T-SVD, and the best T-SVD result is reported. Figure~\ref{compl_resul} shows that TTT yields a clearly better reconstruction than T-SVD on this example.

 \begin{figure}[ht!]
\includegraphics[width=.95\linewidth]{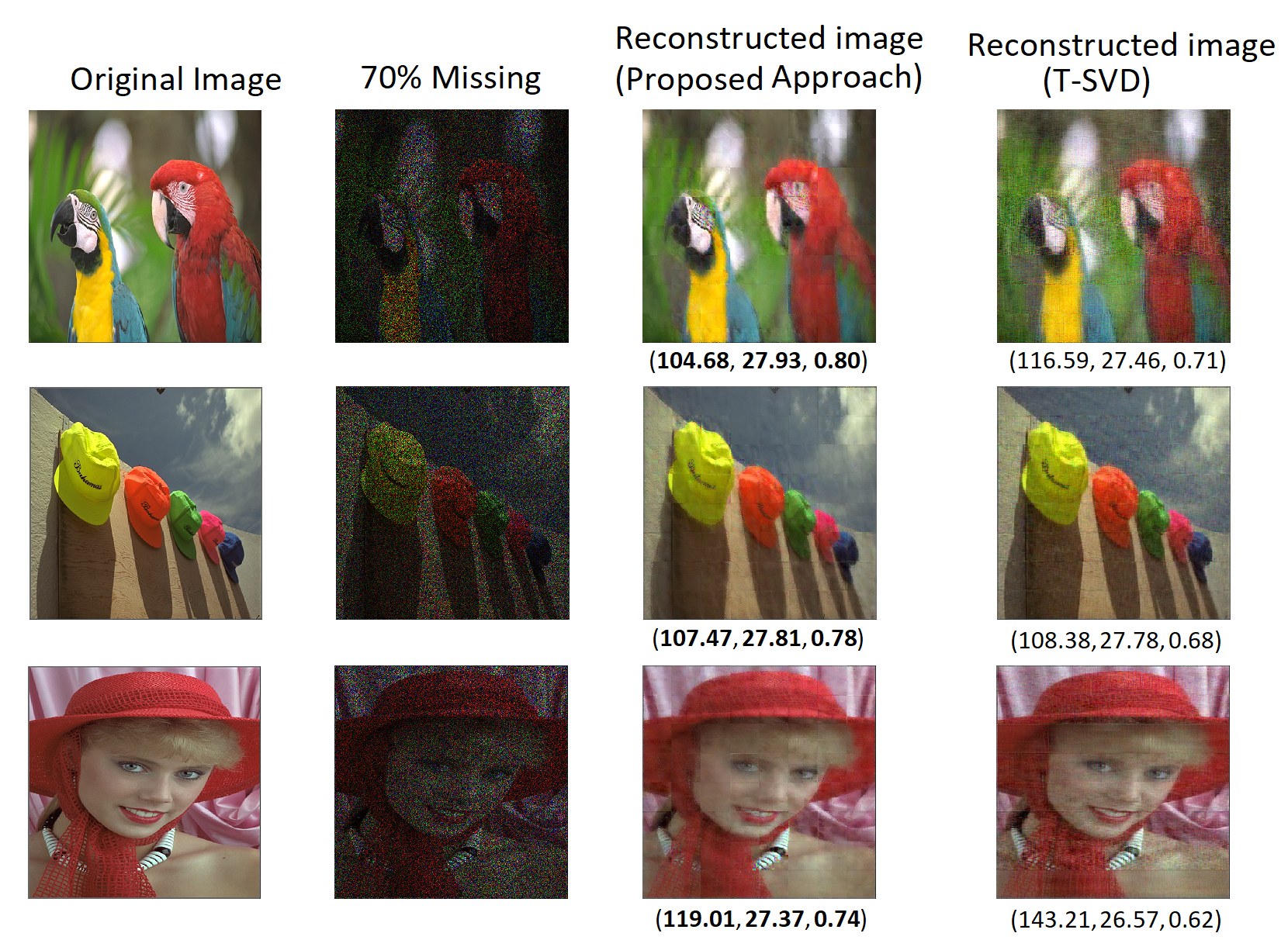}
\centering
\caption{\small Illustration of the reference data, the incomplete observations with $70\%$ of entries removed at random, and the reconstructions obtained with the proposed model and T-SVD for Example~\ref{compl_exa}. The displayed triplets correspond to ({\bf MSE, PSNR, SSIM}).}
\label{compl_resul}
\end{figure}

\end{exa}

\begin{exa}

({\bf Hyperspectral images}) We finally benchmark TTT and Algorithm~\ref{Alg_tascu} on hyperspectral images. Two comparison protocols are considered: fixed approximation accuracy (stage~1) and matched number of stored parameters (stage~2). The first protocol evaluates which model is more parsimonious at a prescribed reconstruction accuracy, whereas the second evaluates which model gives better reconstruction quality under a comparable storage budget. The quality measures are PSNR, RMSE, ERGAS, SAM, and UIQI. Table~\ref{tab:quali_measu2} reports representative results on one data set, while detailed descriptions of the data, the protocol, and the remaining numerical results are deferred to Appendices~\ref{subsec:dataset_desImage}--\ref{subsec:num_tests}.

\begin{center}
\begin{table*}[ht!]
\begin{center}
\caption{Comparison of tensor decomposition models on the ROSIS Pavia Univ. data set.
The table reports the quantitative quality detailed in Section \ref{subsec:PerfEval}.}
\label{tab:quali_measu2}
\begin{tabular}{|c|c|c|c|c|c|c|c|}
\hline
Method      & Runtime (sec.) & PSNR (dB) & RMSE & ERGAS & SAM & UIQI & \#Paras \\
\hline
Best value & 0  & $\infty$ & 0 & 0 & 0 & 1 & 0 \\
\hline
\multicolumn{8}{|c|}{Data set - ROSIS Pavia Univ. - $N=14$ - Relative error fixed and set to 0.08} \\
\hline

TTT & 61.26   & \textbf{35.89} & \textbf{130.02} & 10.77 & 3.01 & \textbf{0.97} & \textbf{12454744} \\
TT  & \textbf{42.93}   & 35.89 & 130.03 & \textbf{10.71}  & \textbf{2.82} & 0.97 & 13600746 \\

 \hline
 \multicolumn{8}{|c|}{Data set - ROSIS Pavia Univ. - $N=14$ - "equal" number of parameters } \\
\hline
TTT & 52.78   & \textbf{36.33}& \textbf{123.52} & \textbf{10.26} & 2.84 & \textbf{0.97} & \textbf{13565592}\\
TT  & \textbf{42.93}   & 35.89 & 130.03 & 10.71  & \textbf{2.82} & 0.97 & 13600746 \\

\hline
\end{tabular}
\end{center}
\end{table*}
\end{center}
\end{exa}

Across most hyperspectral benchmarks, the same qualitative trend is observed. At fixed accuracy (stage~1), TT is usually faster, whereas TTT tends to require fewer parameters while delivering comparable quality scores. At matched parameter count (stage~2), TTT generally provides the stronger reconstruction. Overall, the proposed decomposition and Algorithm~\ref{Alg_tascu} are competitive with strong TT baselines on this class of data.

\section{Conclusion}\label{sec:con}

We introduced the tubal tensor train (TTT) decomposition, a tensor-network format that combines the t-product structure of T-SVD with the low-order-core organization of TT. The resulting model replaces the high-order cores of direct T-SVD extensions by a train of third- and fourth-order tubal cores, thereby alleviating the curse of dimensionality while preserving the tube-wise convolutional structure.

Two computational strategies were presented: the sequential fixed-rank TTT-SVD construction and the Fourier-slice TATCU scheme. We also stated the standard TT-SVD-type error bound satisfied by TTT-SVD. The numerical experiments on images, videos, tensor completion, and hyperspectral data indicate that the proposed format is competitive and often advantageous in terms of reconstruction quality and compression.

Several directions remain open. On the algorithmic side, randomized and sketching-based variants of the local T-SVD steps are natural candidates for large-scale problems. On the modeling side, the same tubal-network principle can be combined with other tensor topologies such as tensor chain/ring or more general tensor networks. The TTT model was originally defined over the real numbers; its extension to complex numbers and quaternions is currently under investigation. These extensions will be investigated in future work.

\bibliographystyle{plainnat}
\bibliography{references}

\appendix

\section{Additional experiments on hyperspectral images}\label{sec:appen_HSI}
In this appendix, we consider hyperspectral images to evaluate the effectiveness of the proposed TTT model in comparison with the TT decomposition method. All the algorithms are implemented and tested on a laptop computer with Intel Core i7-11800H@2.30GHz CPU, and 16GB memory.
\subsection{Data sets}\label{subsec:dataset_desImage}
A hyperspectral image (HSI) is an image that contains information over a wide spectrum of light instead of just assigning primary colors (red, green, and blue) to each pixel as in RGB images. The spectral range of typical airborne sensors is 380-12700 nm and 400-1400 nm for satellite sensors. For instance, the AVIRIS airborne hyperspectral imaging sensor records spectral data over 224 continuous channels. The advantage of HSI is that they provide more information on what is imaged, some of it blind to the human eye as many wavelengths belong to the invisible light spectrum. This additional information allows one to identify and characterize the constitutive materials present in a scenery. We consider the following real HSIs: \begin{itemize}
    \item HYDICE Urban:
    The Urban data set\footnote{\url{http://lesun.weebly.com/hyperspectral-data-set.html} \label{note1} } consists of 307$\times$307 pixels and 162 spectral reflectance bands in the wavelength range 400nm to 2500nm.

    \item ROSIS Pavia University: The Pavia University data set\textsuperscript{\ref{note1}} was acquired by the ROSIS sensor during a flight campaign over Pavia, northern Italy, and consists of 610$\times$340 pixels and 109 spectral reflectance bands.
    \item AVIRIS Moffett Field: this data set has been acquired with over Moffett Field (CA, USA) in 1997 by the JPL spectro-imager AVIRIS \footnote{\url{https://aviris.jpl.nasa.gov/data/image_cube.html} } and consists of 512$\times$614 pixels and 224 spectral reflectance bands in the wavelength range 400nm to 2500nm. Due to the water vapor and atmospheric effects, we remove the noisy spectral bands. After this process, there remains 159 bands.

    \item AVIRIS Kennedy Space Center: this data set\footnote{\url{http://www.ehu.eus/ccwintco/index.php?title=Hyperspectral_Remote_Sensing_Scenes} } has been acquired with NASA AVIRIS instrument over the Kennedy Space Center (KSC), Florida, on March 23, 1996. AVIRIS acquires data in 224 bands of 10nm width with center wavelengths from 400nm to 2500nm. After removing water absorption and low SNR bands, 176 bands are used for the analysis. We finally extract a 250$\times$250 subimage from this dataset for our numerical experiments.
\end{itemize}

\subsection{Test procedure}\label{sebsec:HSITestProc}

We conduct a comprehensive benchmark of our proposed TTT model against the TT-based decomposition model for each dataset outlined in \ref{subsec:dataset_desImage}. Note that the data sets are first folded into a $N$-order tensor before computing the tensor decompositions. The testing procedure consists of two successive and complementary stages:

\begin{enumerate}
    \item In the first stage, we impose the same approximation error bound for both models and evaluate the performance of the approximations, as well as the number of parameters for each model. It is important to note that, for this setup, we utilize Algorithm \ref{Alg_tttr} to compute the TTT decomposition.

    \item In the second stage, we adapt the approximation error bound such that each model has an equal number of parameters. We then assess the quality of both decompositions under this constraint.
\end{enumerate}

The overall objective is to provide a fair and comprehensive comparison of the two models.

\subsubsection{Performance evaluation}\label{subsec:PerfEval}

In order to evaluate the quality of the tensor decompositions, we consider five widely used and complementary quality measurements:

\begin{itemize}
    \item Peak Signal-to-Noise Ratio (PSNR): PSNR assesses the spatial reconstruction quality of each band. It measures the ratio between the maximum power of a signal and the power of residual errors. A higher PSNR value indicates better spatial reconstruction quality.

    \item Root Mean Square Error (RMSE): RMSE is a similarity measure between the input tensor/image $\underline{\bf X}$ and the approximated tensor/image $\widehat{\underline{\bf X}}$. A smaller RMSE value indicates better fusion quality.

    \item Erreur Relative Globale Adimensionnelle de Synthèse (ERGAS): ERGAS provides a macroscopic statistical measure of the quality of the fused data. It calculates the amount of spectral distortion in the image \cite{Wald2000_mes}. The best value for ERGAS is 0.

    \item Spectral Angle Mapper (SAM): SAM quantifies the preservation of spectral information at each pixel. It computes the angle between two vectors of the estimated and reference spectra, resulting in a spectral distance measure. The overall SAM is obtained by averaging the SAMs computed for all image pixels. A smaller absolute value of SAM indicates better quality for the approximated tensor $\widehat{\underline{\bf X}}$.

    \item Universal Image Quality Index (UIQI): UIQI evaluates the similarity between two single-band images. It considers correlation, luminance distortion, and contrast distortion of the estimated image with respect to the reference image \cite{Wang2002}. The UIQI indicator ranges from -1 to 1. For multiband images, the overall UIQI is computed by averaging the UIQI values computed band by band. The best value for UIQI is 1.
\end{itemize}

For more details about these quality measurements, we refer the reader to \cite{Loncan2015}  and \cite{wei2016}.

\subsection{Experimental results}\label{subsec:num_tests}
The performance of each algorithm is presented in Table \ref{tab:quali_measu2} and Tables \ref{tab:quali_measu1} to \ref{tab:quali_measu4}. It is important to note that, for the K.S.C. dataset, despite our best efforts, we were unable to achieve an equal number of parameters for both models. The TT model consistently maintained a stable number of parameters around 3e6, which was significantly lower than the number of parameters obtained for the TTT model, even at higher levels of accuracy. As a result, we modified the test procedure for this dataset by using a lower level of accuracy in stage 2. In order to give more insights on the performance comparison between models,  Figures~\ref{fig:SamMaps_Urban} to \ref{fig:SamMaps_Moffett} display the SAM maps obtained for three first HSI data sets detailed in Section \ref{sec:appen_HSI} for both stages of the test procedure.

For the first three datasets, we observe a consistent trend: at the same level of accuracy (stage 1), the TT model demonstrates faster computation time compared to the TTT model. However, the TTT model exhibits a lower number of parameters, and both models yield similar values for the various quality measurements. Moving to stage 2, the TTT model consistently outperforms the TT model overall. Analyzing the SAM maps in Figures~\ref{fig:SamMaps_Urban} to \ref{fig:SamMaps_Moffett}, we observe that the TTT model achieves higher accuracy in areas with higher gradients. On the other hand, the TT model shows slightly better accuracy in less variable areas such as those corresponding to trees and grass (see, for instance, the right-hand side of the SAM maps for the Urban data set). Notably, in Figure~\ref{fig:SamMaps_Moffett}, during stage 2, the TTT model significantly outperforms the TT model in accurately reconstructing the water (northern part of the image).

In the case of the last dataset, we observe that the TTT model requires a significantly higher number of parameters to achieve both accuracy levels. However, it is worth noting that despite this difference, the TTT model demonstrates faster computation time and yields better values for the various quality measurements.

\begin{center}
\begin{table*}[ht!]
\begin{center}
\caption{Comparison of tensor decomposition models on the HYDICE Urban data set.
The table reports the quantitative quality detailed in Section \ref{subsec:PerfEval}.}
\label{tab:quali_measu1}
\begin{tabular}{|c|c|c|c|c|c|c|c|}
\hline
Method      & Runtime (sec.) & PSNR (dB) & RMSE & ERGAS & SAM & UIQI & \#Paras \\
\hline
Best value & 0  & $\infty$ & 0 & 0 & 0 & 1 & 0 \\
\hline
\multicolumn{8}{|c|}{Data set - HYDICE Urban - $N=13$ - Relative error fixed and set to 0.08} \\
\hline

TTT & 39.12   & 33.67 & \textbf{16.41} & 10.31 & 3.18 & 0.97 & \textbf{9010360} \\
TT  & \textbf{22.04}   & \textbf{33.80} & 16.42 & \textbf{10.08}  & \textbf{3.02} & 0.98 & 10670310 \\

 \hline
 \multicolumn{8}{|c|}{Data set - HYDICE Urban - $N=13$ - "equal" number of parameters } \\
\hline
TTT & 23.81   & \textbf{35.92}& \textbf{12.71} & \textbf{7.95} & \textbf{2.60} & \textbf{0.98} & \textbf{10559780}\\
TT  & \textbf{22.04}   & 33.80 & 16.42 & 10.08  & 3.02 & 0.98 & 10670310 \\

\hline
\end{tabular}
\end{center}
\end{table*}
\end{center}

\begin{center}
\begin{table*}[ht!]
\begin{center}
\caption{Comparison of tensor decomposition models on the AVIRIS Moffett data set.
The table reports the quantitative quality detailed in Section \ref{subsec:PerfEval}.}
\label{tab:quali_measu3}
\begin{tabular}{|c|c|c|c|c|c|c|c|}
\hline
Method      & Runtime (sec.) & PSNR (dB) & RMSE & ERGAS & SAM & UIQI & \#Paras \\
\hline
Best value & 0  & $\infty$ & 0 & 0 & 0 & 1 & 0 \\
\hline
\multicolumn{8}{|c|}{Data set - Moffett - $N=10$ - Relative error fixed and set to 0.02} \\
\hline

TTT & 0.68            & 39.72 & 0.01 & 4.90 & 6.22 & 1.00 & \textbf{362150} \\
TT  &  \textbf{0.28}  & \textbf{40.61} & 0.01 & \textbf{3.10}  & \textbf{5.78} & 1.00 & 418905 \\

 \hline
 \multicolumn{8}{|c|}{Data set - Moffett - $N=10$ - "equal" number of parameters } \\
\hline
TTT & \textbf{0.66}   & \textbf{44.15} & \textbf{0.00} & \textbf{2.93} & \textbf{4.06} & 1.00 & \textbf{407975} \\
TT  &  \textbf{0.28}  & 40.61 & 0.01 & 3.10  & 5.78 & 1.00 & 418905 \\

\hline
\end{tabular}
\end{center}
\end{table*}
\end{center}

\begin{center}
\begin{table*}[ht!]
\begin{center}
\caption{Comparison of tensor decomposition models on the AVIRIS Kennedy Space Center data set.
The table reports the quantitative quality detailed in Section \ref{subsec:PerfEval}.}
\label{tab:quali_measu4}
\begin{tabular}{|c|c|c|c|c|c|c|c|}
\hline
Method      & Runtime (sec.) & PSNR (dB) & RMSE & ERGAS & SAM & UIQI & \#Paras \\
\hline
Best value & 0  & $\infty$ & 0 & 0 & 0 & 1 & 0 \\
\hline
\multicolumn{8}{|c|}{Data set - K.S.C. - $N=14$ - Relative error fixed and set to 0.05} \\
\hline

TTT & \textbf{19.39}   & \textbf{39.99} & \textbf{130.13} & \textbf{86.88} & \textbf{25.16} & \textbf{0.49} & 8534545 \\
TT  &  26.90  & 38.28 & 130.35 & 119.49  & 37.73 & 0.32 & \textbf{3484834} \\

 \hline
 \multicolumn{8}{|c|}{Data set - K.S.C. - $N=14$ - Relative error fixed and set to 0.02 } \\
\hline
TTT & \textbf{19.28}   & \textbf{45.83} & \textbf{51.52} & \textbf{47.42} & \textbf{16.73} & \textbf{0.58} & 9188870 \\
TT  &  25.52  & 44.14 & 52.14 & 55.26  & 25.72 & 0.42 & \textbf{3976714} \\

\hline
\end{tabular}
\end{center}
\end{table*}
\end{center}

\begin{figure}[ht!]
	\centering
	\begin{subfigure}{0.49\linewidth}
		\includegraphics[width=\linewidth]{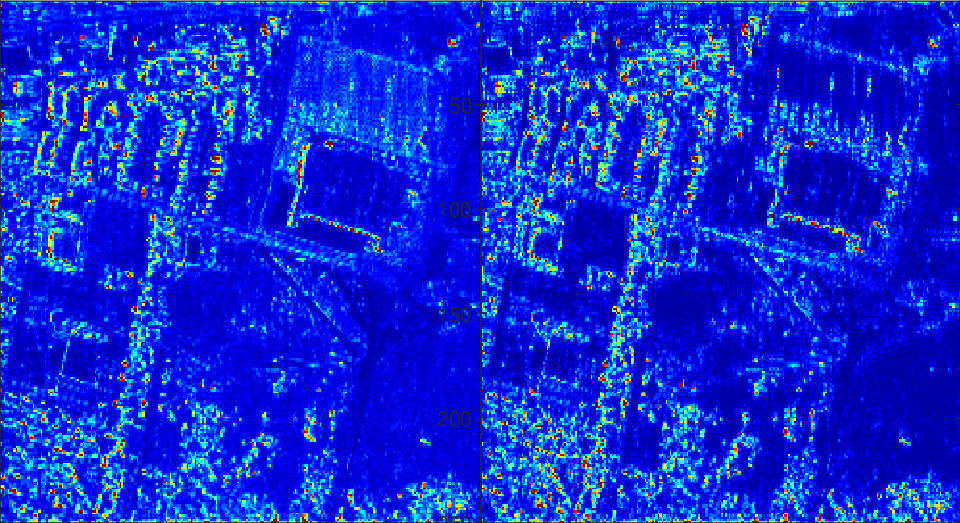}
		\caption{Stage 1: TTT (left) and TT (right)}
            \label{fig:Urban_stage1}
	\end{subfigure}
	\begin{subfigure}{0.49\linewidth}
		\includegraphics[width=\linewidth]{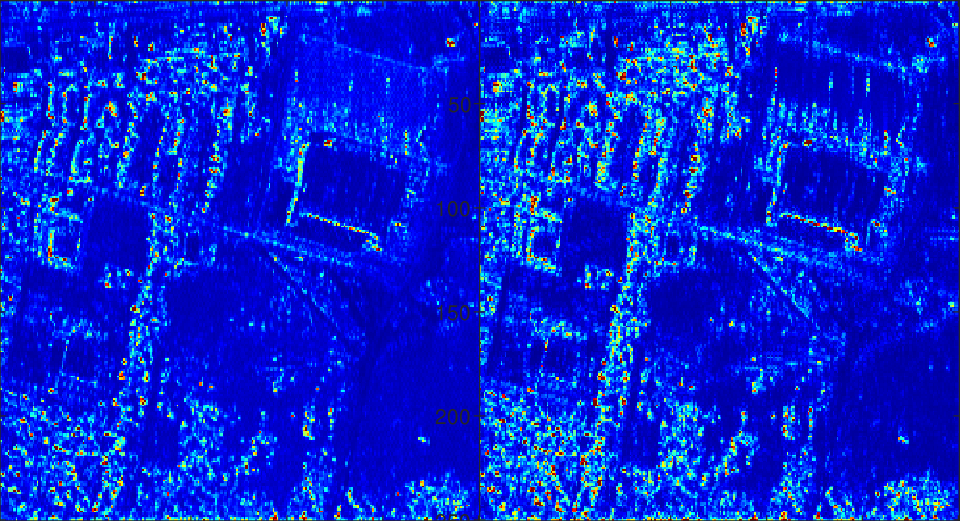}
		\caption{Stage 2: TTT (left) and TT (right)}
            \label{fig:Urban_stage2}
	\end{subfigure}
	\caption{SAM maps for HYDICE Urban data set for both stages of the test procedure detailed in Section \ref{sebsec:HSITestProc}, the values for sam quality measurement are limited to the interval [0;20].}
        \label{fig:SamMaps_Urban}
\end{figure}

\begin{figure}[ht!]
	\centering
	\begin{subfigure}{0.49\linewidth}
		\includegraphics[width=\linewidth]{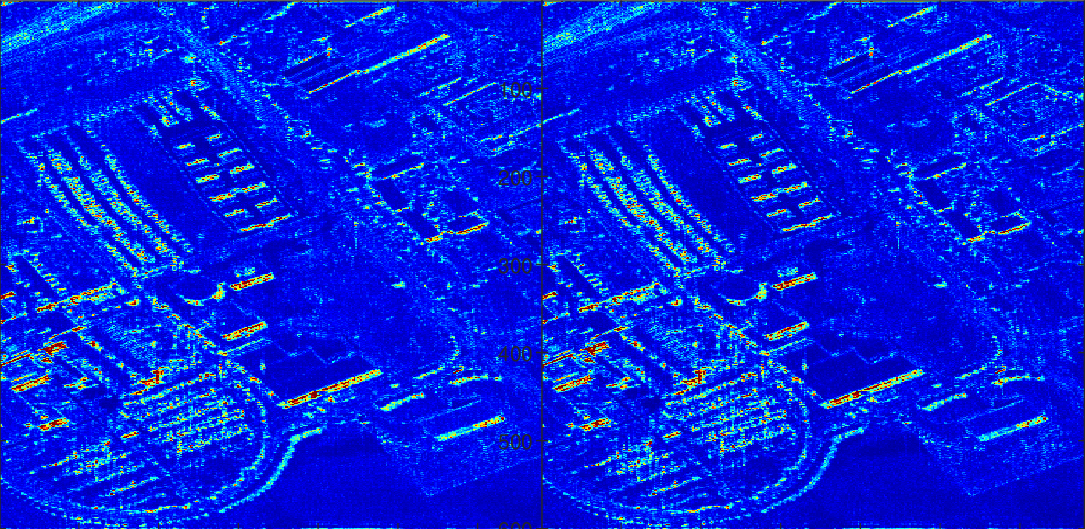}
		\caption{Stage 1: TTT (left) and TT (right)}
            \label{fig:PAvia_stage1}
	\end{subfigure}
	\begin{subfigure}{0.49\linewidth}
		\includegraphics[width=\linewidth]{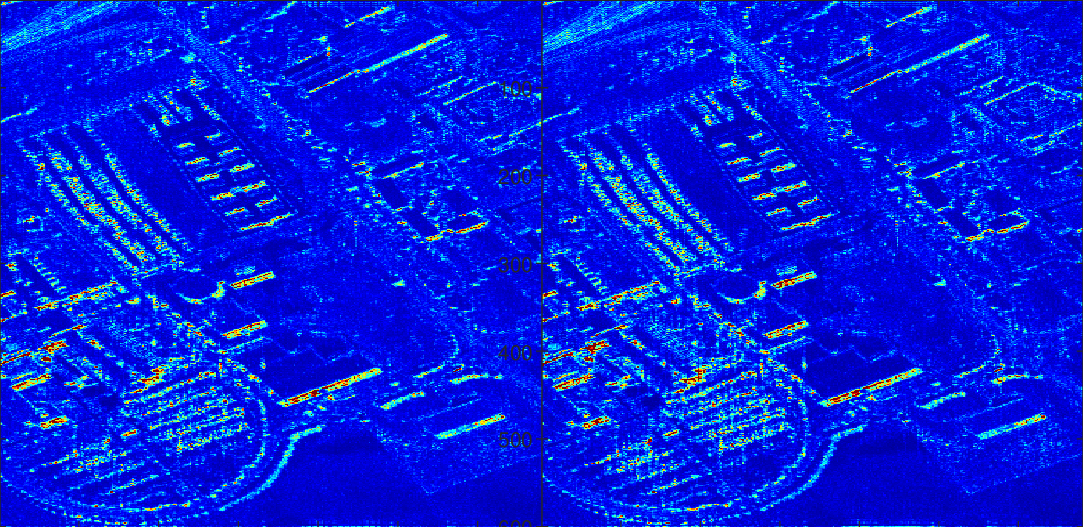}
		\caption{Stage 2: TTT (left) and TT (right)}
            \label{fig:PAvia_stage2}
	\end{subfigure}
	\caption{SAM maps for ROSIS Pavia University data set for both stages of the test procedure detailed in Section \ref{sebsec:HSITestProc}, the values for sam quality measurement are limited to the interval [0;20].}
        \label{fig:SamMaps_Pavia}
\end{figure}

\begin{figure}[ht!]
	\centering
	\begin{subfigure}{0.49\linewidth}
		\includegraphics[width=\linewidth]{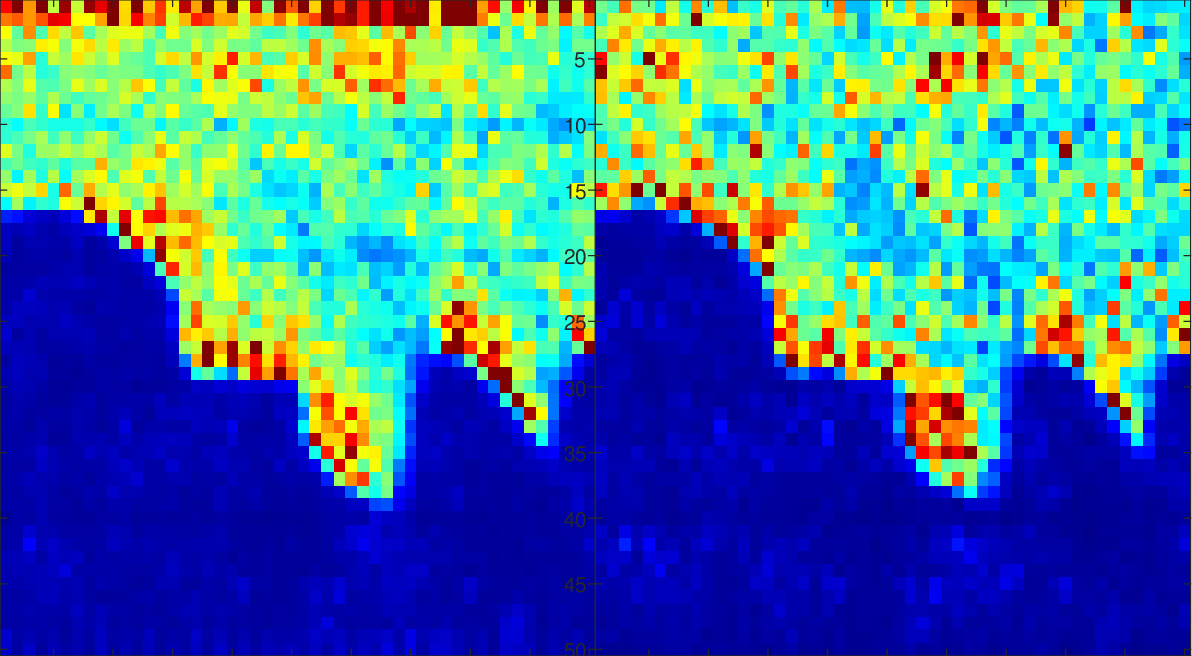}
		\caption{Stage 1: TTT (left) and TT (right)}
            \label{fig:Moffett_stage1}
	\end{subfigure}
	\begin{subfigure}{0.49\linewidth}
		\includegraphics[width=\linewidth]{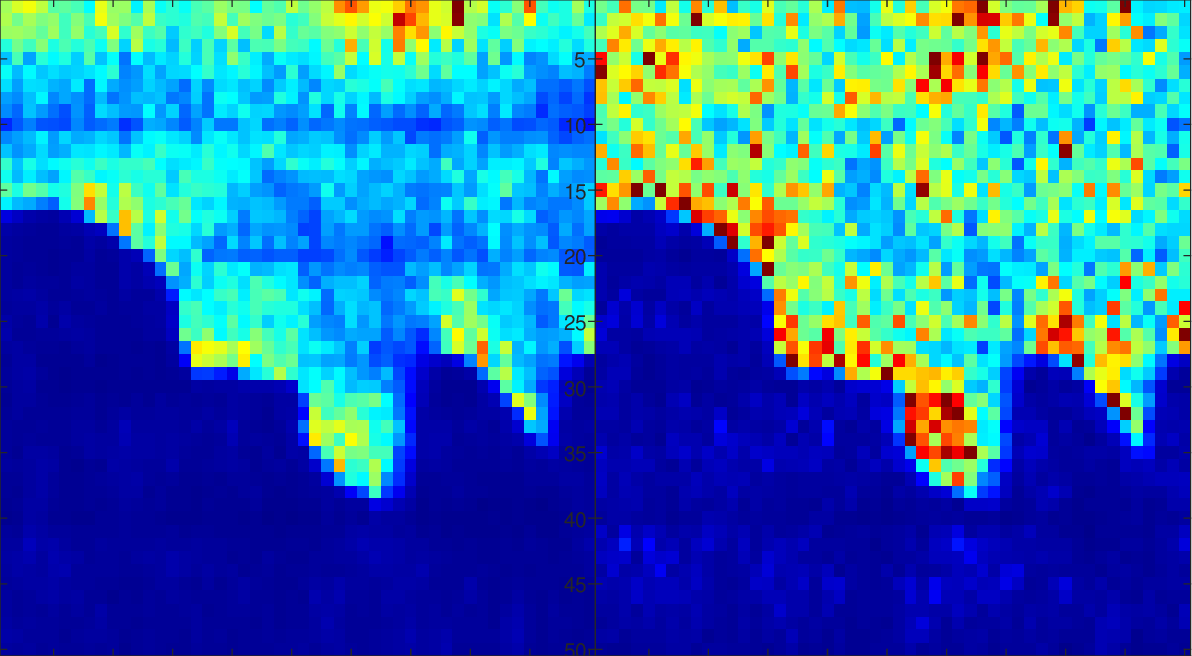}
		\caption{Stage 2: TTT (left) and TT (right)}
            \label{fig:Moffett_stage2}
	\end{subfigure}
	\caption{SAM maps for AVIRIS Moffett data set for both stages of the test procedure detailed in Section \ref{sebsec:HSITestProc}, the values for sam quality measurement are limited to the interval [0;20].}
        \label{fig:SamMaps_Moffett}
\end{figure}

\section{Supplementary graphical illustrations of TT, T-SVD, and TTT-SVD}\label{app:more_tt_ttt_algo}
This appendix provides graphical illustrations complementing the definitions and algorithms in the main text. Figure~\ref{TT} recalls the TT decomposition as a contraction of a sequence of low-order cores: an $N$th-order tensor is represented by two boundary matrices and $(N-2)$ third-order interior cores. The T-SVD was introduced in Section~\ref{Sec:prelim}, and its role as a motivation for TTT was discussed in Section~\ref{t-svd_sec}. Figures~\ref{t_svd_2} and \ref{t_svd_3} visualize the truncated T-SVD as a sum of tubal rank-1 terms and illustrate the corresponding viewpoint for third- and fourth-order tensors. Finally, Figure~\ref{tsvd_demo} provides a schematic description of the successive reshape--truncated-T-SVD steps used by Algorithm~\ref{Alg_tttr} to construct the TTT decomposition.

\begin{figure}[ht!]
\includegraphics[width=.7\linewidth]{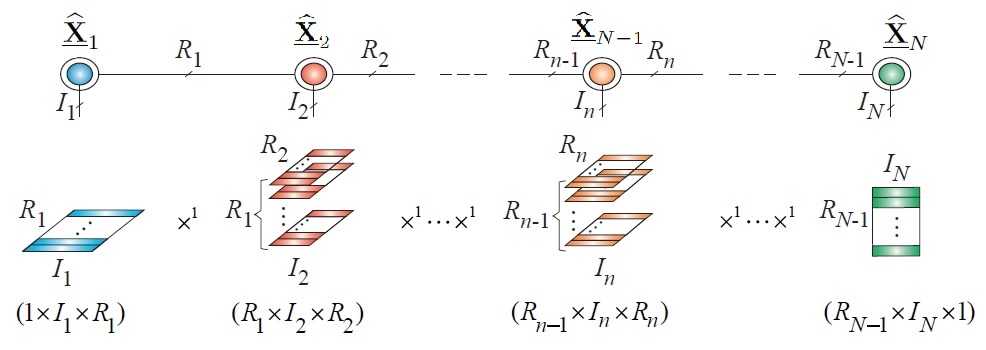}
\centering
\caption{\small Structure of the TT decomposition for an $N$-th order tensor.}
\label{TT}
\end{figure}

\begin{figure}[ht!]
\includegraphics[width=.5\linewidth]{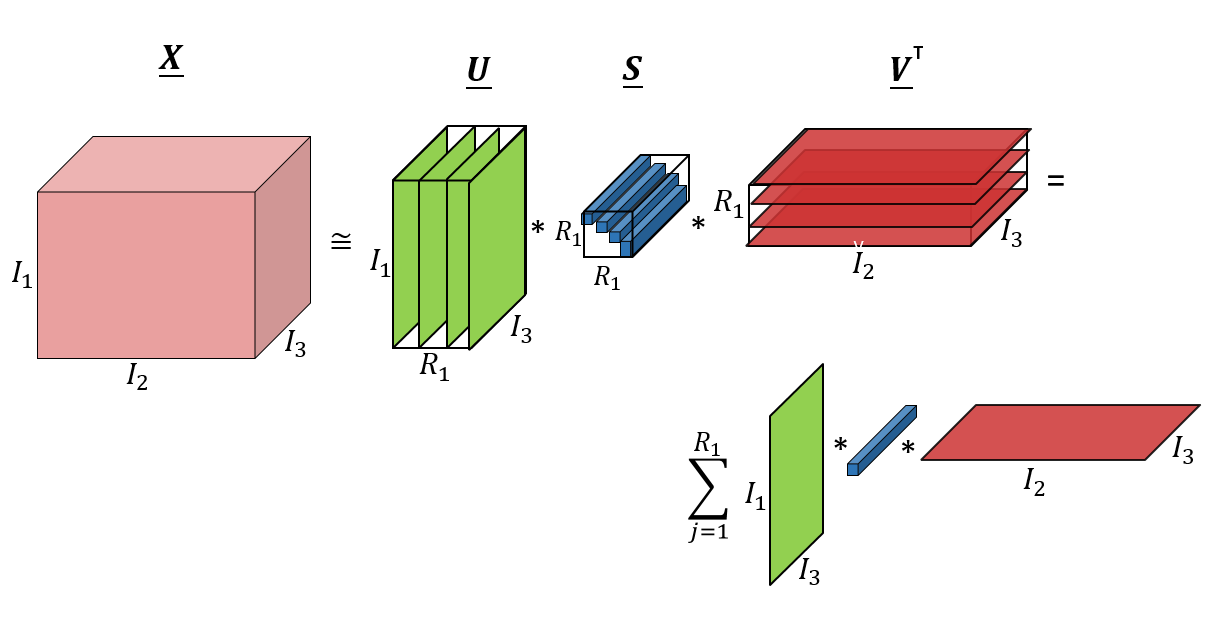}
\centering
\caption{\small The truncated T-SVD of a tensor $\underline{\bf X}$ as a sum of tubal rank-1 terms.}
\label{t_svd_2}
\end{figure}

\begin{figure}[ht!]
\includegraphics[width=.5\linewidth]{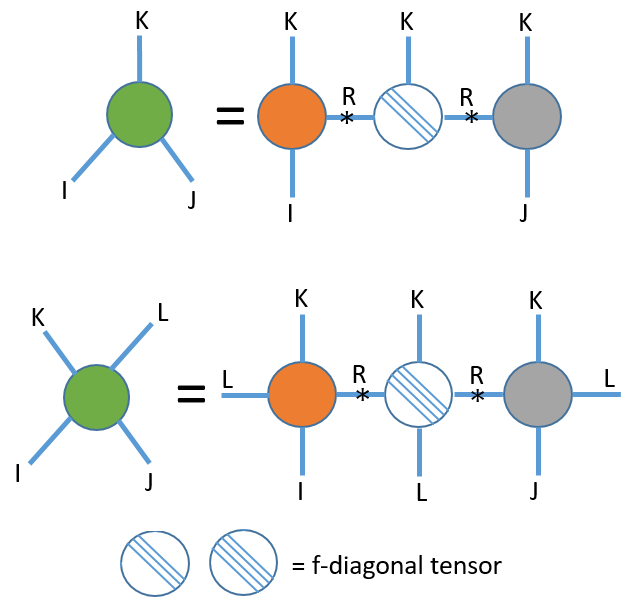}
\centering
\caption{\small The truncated T-SVD of a third-order and a fourth-order tensor.}
\label{t_svd_3}
\end{figure}

\begin{figure}[ht!]
\includegraphics[width=0.5\linewidth]{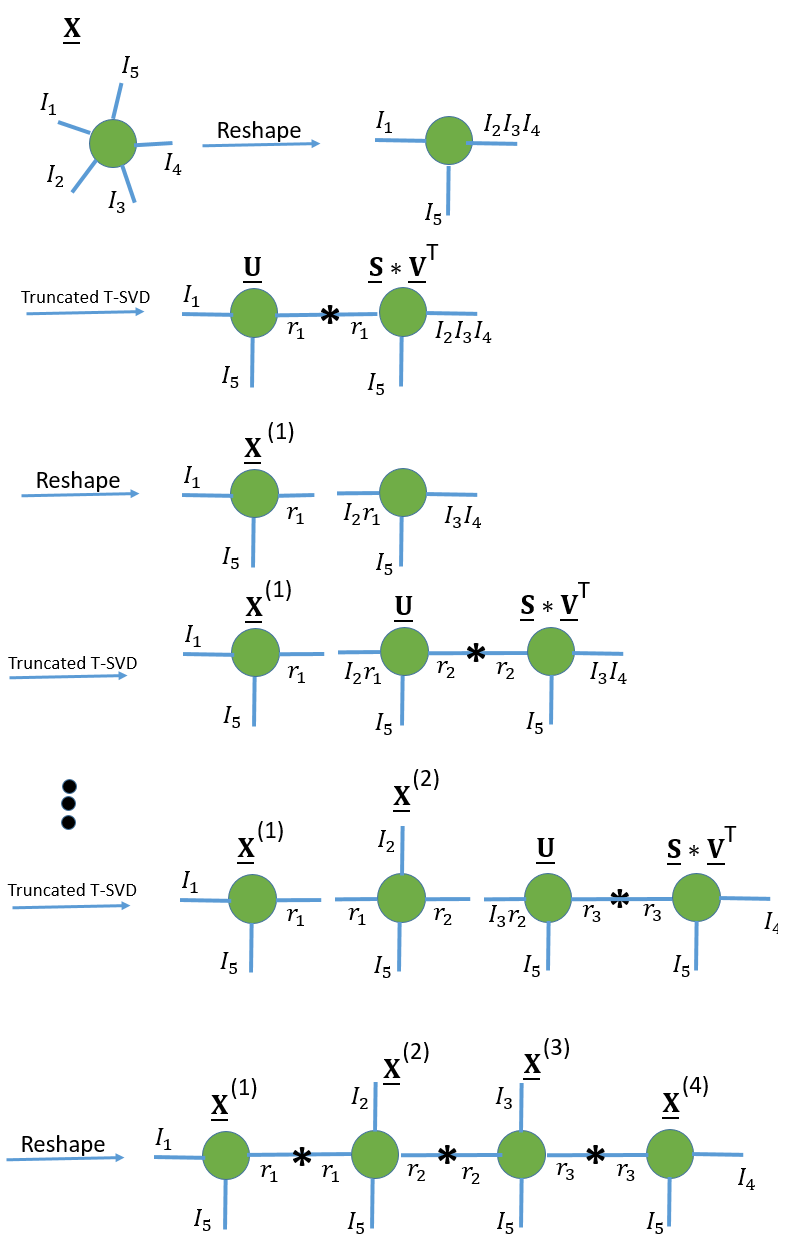}
\centering
\caption{\small The procedure of the TTT-SVD for decomposing a tensor into the TTT format.}
\label{tsvd_demo}
\end{figure}
\subsection{Other generalized tubal tensor models}
The T-SVD can also be combined with other tensor-network structures such as tensor ring/tensor chain (TR/TC), CPD, tensor wheel decomposition \cite{wu2022tensor}, or more general networks. Figure~\ref{TTR} gives a graphical illustration of a tubal TC/TR (TTC/TTR) construction. In that setting, an $N$th-order tensor is represented by $N$ fourth-order tubal cores arranged in a cycle, so that a tube of the original tensor is obtained through the trace of a cyclic sequence of t-products; for a third-order tensor $\cX$, this trace is defined by ${\rm Trace}(\cX)=\sum_{i=1}^{I_1}\cX(i,i,:)$. In analogy with the classical TR/TC model, the TTC/TTR format is characterized by a cyclic rank profile $(r_0,r_1,\ldots,r_{N-1})$, where the indices are understood modulo $N$ (equivalently, one may write $r_N:=r_0$ to close the cycle). The TTT model is recovered as the open-boundary analogue obtained when the cyclic connection is removed and the two boundary ranks are fixed to one. For the tubal HOSVD, see \cite{wang2022hot}. A detailed study of these generalized tubal models is beyond the scope of the present paper and is left for future work. Moreover, the proposed algorithms could in principle be accelerated using randomization, which will also be investigated in future work.

\begin{figure}[ht!]
\includegraphics[width=0.6\linewidth]{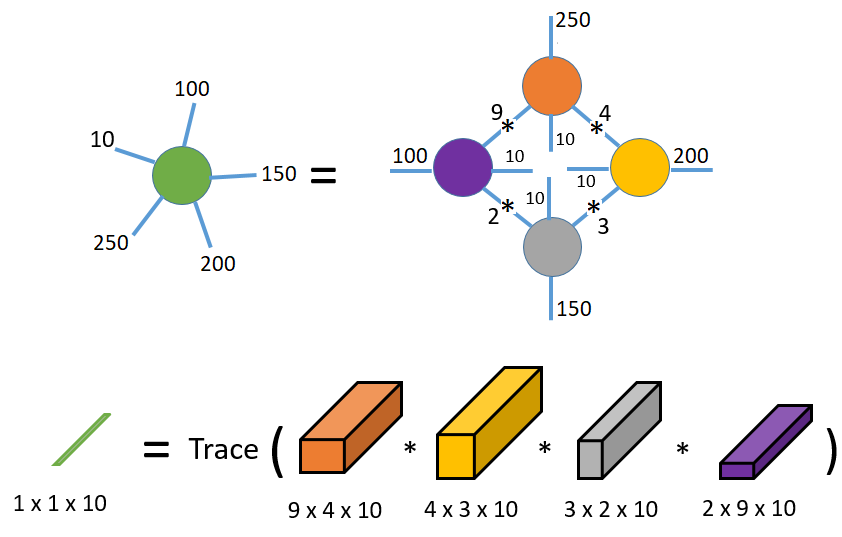}
\centering
\caption{\small (Upper) The visualization of the structure of the proposed tubal TC/TR decomposition. The connection between the core tensors is the T-product. (Bottom) A tube of the original tensor is obtained through a sequence of t-products involving sub-tensors of the TTC/TTR cores.}
\label{TTR}
\end{figure}
\end{document}